\documentclass{amsart}

\pdfoutput=1

\usepackage{amssymb,amsthm}
\usepackage{enumitem}
\usepackage{graphicx}
\usepackage{mathptmx}
\usepackage{natbib}
\usepackage{verbatim}

\newtheorem{definition}{Definition}

\newtheorem{lemma}{Lemma}
\newtheorem{proposition}{Proposition}
\newtheorem{theorem}{Theorem}

\begin{document}

\title{Exchangeability and the Law of Maturity}

\author{Fernando V. Bonassi}
\address{Google Inc \\
	 Mountain View, CA 94043, USA}
\email{bonassi@gmail.com}
\thanks{Partially supported by CNPq and FAPESP (2003/10105-2). The authors thank Dani Gamerman, Jay Kadane, Carlos Pereira, Teddy Seidenfeld, Julio Stern and Robert Winkler for insightful remarks.}

\author{Rafael B. Stern}
\address{Department of Statistics \\
	 Carnegie Mellon University \\ 
	 Pittsburgh, PA 15217, USA}
\email{rbstern@gmail.com}

\author{Sergio Wechsler}
\address{Instituto de Matem\'{a}tica e Estat\'{i}stica \\
	 Universidade de S\~{a}o Paulo \\
	 S\~{a}o Paulo, SP, Brazil}
\email{sw@ime.usp.br}

\author{Claudia M. Peixoto}
\address{Instituto de Matem\'{a}tica e Estat\'{i}stica \\
	 Universidade de S\~{a}o Paulo\\
	 S\~{a}o Paulo, SP, Brazil}
\email{claudia@ime.usp.br}

\keywords{Law of Maturity \and Exchangeability \and Gambler's Fallacy \and Belief in Maturity \and Bayesian Statistics \and 0-1 Process}
\begin{abstract}
The \textit{law of maturity} is the belief that less-observed events are becoming mature and, therefore, more likely to occur in the future. Previous studies have shown that the assumption of infinite exchangeability contradicts the \textit{law of maturity}. In particular, it has been shown that infinite exchangeability contradicts probabilistic descriptions of the \textit{law of maturity} such as the \textit{gambler's belief} and the \textit{belief in maturity}. We show that the weaker assumption of finite exchangeability is compatible with both the \textit{gambler's belief} and \textit{belief in maturity}. We provide sufficient conditions under which these beliefs hold under finite exchangeability. These conditions are illustrated with commonly used parametric models.
\end{abstract}

\maketitle

\section{Introduction}

The \textit{law of maturity (of chances)} is the belief that less-observed events are mature and, therefore, more likely to occur in the future. The law of maturity has been observed in fields as varied as behavioral finance \citep{rabin2010}, cognitive psychology \citep{militana2010} and decision-making \citep{oppenheimer2009}.  
Simple examples of the law of maturity are:

\begin{itemize} 
  \item The belief that successive increments in a stock price will eventually cause it to decrease.
  \item The belief that the chance of obtaining tails in a coin flipping process increases as the proportion of observed heads increases.
\end{itemize}

This paper focuses on finding conditions such that the \textit{law of maturity} holds under a model of symmetry between observations. A commonly used statistical model of symmetry is that of infinite exchangeability \citep{bernardo94}. Hence, a possible initial consideration is whether infinite exchangeability is compatible with the \textit{law of maturity}.

This question is discussed in \citet{neill2005}[hereafter, OP] and in \citet{rodrigues1993}[hereafter, RW]. Each paper provides a different probabilistic description of the \textit{law of maturity} and shows that, if a model assumes this description, then it is inconsistent with infinite exchangeability. In particular, both papers show that infinite exchangeability implies the \textit{law of reverse maturity} - less observed events are less likely to occur in the future.

In the following, the aforementioned results are presented in greater detail. This presentation prepares some of the definitions that are used in this paper. In order to do so, consider that the sequence of discrete random variables $\texttt{X} = (X_{1},X_{2},\ldots)$ can be observed sequentially and let $\texttt{X}_{n} = (X_{1},X_{2},\ldots,X_{n})$.

Let $X_{i}$ take value on $\{1,2,\ldots,k\}$. Consider the following probabilistic descriptions of, respectively, the \textit{law of maturity} and the \textit{law of reverse maturity}: 

\vspace{2mm}

\begin{definition}[gambler's belief]
  \label{def:gambler}
  The predictive probability $P_{X_{n+1}|\texttt{X}_{n}}$ of the next possible outcome is ordered inversely to the counts in $\texttt{X}_{n}$.
\end{definition}

\begin{definition}[reverse gambler's belief]
  \label{def:reverse_gambler}
  The predictive probability $P_{X_{n+1}|\texttt{X}_{n}}$ of the next possible outcome is ordered according to the counts in $\texttt{X}_{n}$.
\end{definition}

\vspace{2mm}

If $\texttt{X}$ is infinitely exchangeable and the prior distribution for the parameter induced by de Finetti's theorem \citep{finetti1931} is exchangeable, then the \textit{reverse gambler's belief} holds (OP). Hence, the assumption of infinite exchangeability contradicts the \textit{law of maturity} in this scenario.

In another scenario, let $X_{i}$ take value on $\{0,1\}$. Consider the following probabilistic descriptions of, respectively, the \textit{law of maturity} and the \textit{law of reverse maturity}:

\begin{definition}[belief in maturity]
  \label{def:maturity}
  The longer the streak of observed $0$'s, the higher the probability that the next observation is a $1$. That is, one believes that $P(X_{n+1}=1|x_{n}=0,\ldots,x_{1}=0)$, increases with $n$.
\end{definition}

\begin{definition}[belief in reverse maturity]
  \label{def:reverse_maturity}
  The predictive probability for the end of as streak, $P(X_{n+1}=1|x_{n}=0,\ldots,x_{1}=0)$, decreases with $n$.
\end{definition}

If $\texttt{X}$ is infinitely exchangeable, then the \textit{belief in reverse maturity} holds (RW). Thus, infinite exchangeability contradicts the \textit{law of maturity} also in this scenario.

The aforementioned results show that the \textit{law of maturity} contradicts infinite exchangeability, an assumption of symmetry commonly used in Bayesian studies \citep{lindley1976}. This contradiction motivates the search for weaker symmetry assumptions that support the \textit{law of maturity}. 

This paper relates the \textit{law of maturity} and the assumption of finite exchangebility. Section \ref{model} describes the assumption of finite exchangeability. Section \ref{results} considers this assumption and presents sufficient conditions for, respectively, the \textit{law of maturity} and \textit{law of reverse maturity} to hold. These conditions are exemplified with commonly used parametric models.  
  
\section{Finitely exchangeable models}
\label{model}

The assumption of infinite exchangeability contradicts the \textit{law of maturity} (RW, OP). Is this law consistent with weaker assumptions of symmetry? One such assumption is finite exchangeability \citep{mendel1994,pilar2009}:

\begin{definition}[model of finite exchangeability]
  \label{def:finite_exchangeability}
  Let $\{1,\ldots,N\}$ be indexes of the members of a given population. Let $\texttt{X}_{N} = (X_{1}, X_{2},\ldots, X_{N})$ be a vector of random variables such that $X_{i} \in \{0,1\}$. Let $\gamma(\texttt{X}_{N}) = \sum_{i=1}^{N}{X_{i}}$, that is, the count of $1$'s in $\texttt{X}_{N}$. A sample is a vector of length $n$ that is observed. Denote the observed sample by the vector $\texttt{x}_{n} = (x_{1},x_{2},\ldots,x_{n})$. $\texttt{X}_{N}$ is exchangeable if, for every permutation of $\{1,\ldots,N\}$, $\psi$, and for every $\texttt{x}_{N} \in \{0,1\}^{N}$, 
  \[ P(X_{1} = x_{1}, \ldots, X_{N} = x_{N}) = P(X_{1} = x_{\psi(1)}, \ldots, X_{N} = x_{\psi(N)}) \]  
\end{definition}

This model embraces two opposite tendencies. First, $\sum_{i=1}^{n}{X_{i}}|\gamma=\gamma_{0}$ is distributed according to a Hypergeometric($N,\gamma_0,n$), the distribution obtained by sampling without replacement. If a sample is drawn without replacement, any pair of trials has negative correlation. This fact is favorable to the \textit{law of maturity}. Second, by the use of a Bayesian approach, it is possible to learn about $\gamma$. This fact favors the \textit{law of reverse maturity} (RW, OP). These diverging tendencies suggest that the assumption of finite exchangeability is consistent with both laws.

The following section considers finitely exchangeable models and presents sufficient conditions for, respectively, the \textit{law of maturity} and the \textit{law of reverse maturity} to hold. Observe that the assumption of finite exchangeability and a prior on $\gamma$ completely specify the distribution of $\texttt{X}_{N}$. Hence, the conditions in the next section are expressed in terms of the prior distribution for $\gamma$. 

\section{Results}
\label{results}

\subsection{Indifferent Belief}
\label{sec:indifferent}

This subsection studies the borderline between the \textit{law of maturity} and the \textit{law of reverse maturity}. We choose the following probabilistic description of this borderline:

\begin{definition}[indifferent belief]
  \label{def:indifferent}
  There exists $\pi \in [0,1]$ such that, for every $\texttt{x}_{n} \in \{0,1\}^{n}$ and $n < N$, $p(X_{n+1}=1|\texttt{x}_{n}) = \pi$. The predictive probability does not depend on the observations.
\end{definition}

The following results characterize \textit{indifferent belief} according to the prior distribution of $\gamma$:

\begin{proposition}
  \label{indifferent_belief_equiv_1}
  There exists $\pi \in [0,1]$ such that $\gamma \sim \text{Binomial}(N, \pi)$ iff the coordinates of $\texttt{X}_{N}$ are jointly independent.
\end{proposition}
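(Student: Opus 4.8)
The plan is to route both implications through a single identity that expresses the joint law of $\texttt{X}_N$ in terms of the prior on $\gamma$. First I would observe that exchangeability forces the conditional law of $\texttt{X}_N$ given $\gamma = s$ to be uniform over the $\binom{N}{s}$ binary strings containing exactly $s$ ones: any two such strings are permutations of one another, so Definition~\ref{def:finite_exchangeability} assigns them equal probability, and normalizing gives each conditional mass $1/\binom{N}{s}$. Consequently, for every $\texttt{x}_N$ with $\sum_{i=1}^N x_i = s$,
\[ P(\texttt{X}_N = \texttt{x}_N) = \frac{P(\gamma = s)}{\binom{N}{s}}. \]
This formula is the engine of the argument, since it translates any hypothesis on the prior of $\gamma$ directly into the joint probabilities of the coordinates, and conversely.

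For the forward direction, I would substitute the Binomial pmf $P(\gamma = s) = \binom{N}{s}\pi^s(1-\pi)^{N-s}$ into the identity; the binomial coefficients cancel and yield $P(\texttt{X}_N = \texttt{x}_N) = \pi^s(1-\pi)^{N-s} = \prod_{i=1}^N \pi^{x_i}(1-\pi)^{1-x_i}$. Since the joint pmf factors as a product of identical Bernoulli$(\pi)$ terms, the coordinates are jointly independent. For the converse, I would use exchangeability alone to obtain a common marginal success probability $\pi = P(X_i=1)$, which holds because the permutation swapping indices $i$ and $j$ equates the marginals of $X_i$ and $X_j$. Independence then makes $\gamma$ a sum of $N$ i.i.d. Bernoulli$(\pi)$ variables, so $\gamma \sim \text{Binomial}(N,\pi)$, closing the equivalence.

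The proposition is essentially a direct computation once the identity is in hand, so I do not anticipate a serious conceptual obstacle. The step that demands the most care is the derivation of the identity itself, namely the justification that exchangeability renders the conditional law of $\texttt{X}_N$ given $\gamma = s$ uniform; everything downstream is algebraic. I would also note the harmless boundary bookkeeping: the cancellation presumes $\binom{N}{s}>0$, which holds for all $0 \le s \le N$, and the cases $\pi \in \{0,1\}$ correspond to the degenerate Binomial in which $\texttt{X}_N$ is almost surely constant, consistent with (trivial) independence. I expect this minor bookkeeping, rather than any genuine difficulty, to be the only thing requiring attention in a fully rigorous write-up.
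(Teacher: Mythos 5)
Your proof is correct, and for one of the two directions it takes a genuinely different route from the paper. The direction ``independence $\Rightarrow$ Binomial'' is identical in both: exchangeability gives a common marginal $\pi = P(X_i = 1)$, and a sum of i.i.d.\ Bernoulli$(\pi)$ variables is Binomial$(N,\pi)$. The difference lies in the direction ``Binomial $\Rightarrow$ independence.'' You prove it by explicit computation: you establish the identity $P(\texttt{X}_N = \texttt{x}_N) = P(\gamma = s)/\binom{N}{s}$ for any string with $s$ ones, substitute the Binomial pmf, watch the binomial coefficients cancel, and read off the factorization $\prod_{i=1}^{N}\pi^{x_i}(1-\pi)^{1-x_i}$, which exhibits independence directly. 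The paper instead argues abstractly: since an exchangeable law on $\{0,1\}^N$ is completely determined by the law of $\gamma$, the map from priors on $\gamma$ to exchangeable laws is injective; the i.i.d.\ Bernoulli$(\pi)$ product law is exchangeable and induces $\gamma \sim \text{Binomial}(N,\pi)$, so by uniqueness any exchangeable law with Binomial counts \emph{is} that product law. Both arguments rest on the same underlying fact (the exchangeable law is a function of the law of $\gamma$), but yours makes it quantitative and verifies independence by hand, which is more self-contained and makes the cancellation mechanism visible, while the paper's uniqueness argument is shorter and gets the converse essentially for free from the forward direction. One small point of care in your write-up: phrasing the key identity via the conditional law of $\texttt{X}_N$ given $\gamma = s$ presupposes $P(\gamma = s) > 0$; it is cleaner to state the unconditional identity, which follows from exchangeability by counting (all $\binom{N}{s}$ strings with $s$ ones have equal probability summing to $P(\gamma = s)$) and holds trivially even when $P(\gamma = s) = 0$.
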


\begin{proposition}
 \label{indifferent_belief_equiv_2}
 Indifferent belief occurs iff there exists $\pi \in [0,1]$ such that $\gamma \sim \text{Binomial}(N,\pi)$. 
\end{proposition}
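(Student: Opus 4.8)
The plan is to prove the equivalence by routing through the joint-independence characterization already established in Proposition \ref{indifferent_belief_equiv_1}. Concretely, I would show that indifferent belief holds if and only if the coordinates of $\texttt{X}_N$ are jointly independent, and then invoke Proposition \ref{indifferent_belief_equiv_1} to exchange ``jointly independent'' for ``$\gamma \sim \text{Binomial}(N,\pi)$''. This avoids manipulating the Hypergeometric mixture directly and reduces the whole statement to a short factorization argument.

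For the direction assuming $\gamma \sim \text{Binomial}(N,\pi)$: Proposition \ref{indifferent_belief_equiv_1} gives that the coordinates of $\texttt{X}_N$ are jointly independent. Finite exchangeability (Definition \ref{def:finite_exchangeability}) forces all one-dimensional marginals to coincide, so each $X_i$ is Bernoulli with a common parameter; matching $\gamma = \sum_{i=1}^{N} X_i$ against the assumed $\text{Binomial}(N,\pi)$ identifies this parameter as $\pi$. Joint independence then yields $P(X_{n+1}=1\mid \texttt{x}_n) = P(X_{n+1}=1) = \pi$ for every positive-probability history, which is exactly indifferent belief.

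For the converse, assume indifferent belief with constant $\pi$. I would expand the joint law by the chain rule,
\[
  P(\texttt{X}_N = \texttt{x}_N) = \prod_{i=1}^{N} P\bigl(X_i = x_i \mid X_1 = x_1,\ldots,X_{i-1}=x_{i-1}\bigr),
\]
and substitute each factor using indifferent belief applied at stage $n = i-1$: the factor equals $\pi$ when $x_i = 1$ and $1-\pi$ when $x_i = 0$. This collapses the product to $\pi^{\gamma(\texttt{x}_N)}(1-\pi)^{N-\gamma(\texttt{x}_N)}$, i.e., the law of $N$ independent Bernoulli$(\pi)$ variables, establishing joint independence; Proposition \ref{indifferent_belief_equiv_1} then delivers $\gamma \sim \text{Binomial}(N,\pi)$.

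I expect the only real friction to be bookkeeping around conditioning on null events and the endpoints $\pi \in \{0,1\}$. Definition \ref{def:indifferent} only constrains the predictive probability on histories of positive probability, so when $\pi \in \{0,1\}$ (or more generally when some prefix $(x_1,\ldots,x_{i-1})$ has probability zero) the chain-rule factors above are not all literally defined. The clean fix is to treat $\pi \in (0,1)$ as the generic case, where every prefix has strictly positive probability and the factorization is valid verbatim, and to dispatch $\pi \in \{0,1\}$ separately by noting that indifferent belief then forces $\gamma$ to be degenerate at $N$ or $0$, which is precisely the degenerate $\text{Binomial}(N,1)$ or $\text{Binomial}(N,0)$. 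I would also record the $n=0$ reading of Definition \ref{def:indifferent}, or, if the empty history is excluded, recover the first marginal $P(X_1=1)=\pi$ from exchangeability together with the stage-$n=1$ predictive, so that the first chain-rule factor is pinned down as well.
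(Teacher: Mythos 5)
Your proof is correct and takes essentially the same route as the paper's: both directions are reduced to joint independence of the coordinates of $\texttt{X}_N$, with Proposition \ref{indifferent_belief_equiv_1} then converting between independence and $\gamma \sim \text{Binomial}(N,\pi)$. The only difference is one of detail --- where the paper dismisses ``indifferent belief implies joint independence'' as holding by definition, you make it explicit via the chain-rule factorization, and you additionally flag the null-event and $\pi \in \{0,1\}$ edge cases that the paper glosses over.
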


It follows from Proposition \ref{indifferent_belief_equiv_1} that, when $\gamma$ is distributed according to a Binomial($N$, $\pi$), the distribution of $\texttt{X}_{N}$ is the same as the one obtained by sampling with replacement from a known population. Proposition \ref{indifferent_belief_equiv_2} establishes a one to one correspondence between this type of sampling and indifferent belief. The next subsections use Proposition \ref{indifferent_belief_equiv_2} as guiding intuition to present sufficient conditions for the \textit{gambler's belief} and \textit{belief in maturity} to hold. 

\subsection{(Reverse) Gambler's Belief}

The model of finite exchangeability supports both the \textit{gambler's belief} and the \textit{reverse gambler's belief}. If $\gamma$ follows a degenerate distribution on $N/2$, then $\texttt{X}_{N}$ follows the same distribution as sampling without replacement from a known population. In this case, the \textit{gambler's belief} holds. On the other hand, if $\gamma$ follows an uniform distribution on $\{0,1,\ldots,N\}$, then $\texttt{X}_{N}$ can be extended to a model of infinite exchangeability. Hence, it follows from OP that the \textit{reverse gambler's belief} holds. Finally, as shown in subsection \ref{sec:indifferent}, the Binomial$(N,1/2)$ distribution is between the two previous cases and leads to \textit{indifferent belief}. 

\begin{figure}
  \centering
  \includegraphics[height=40mm,width=\columnwidth]{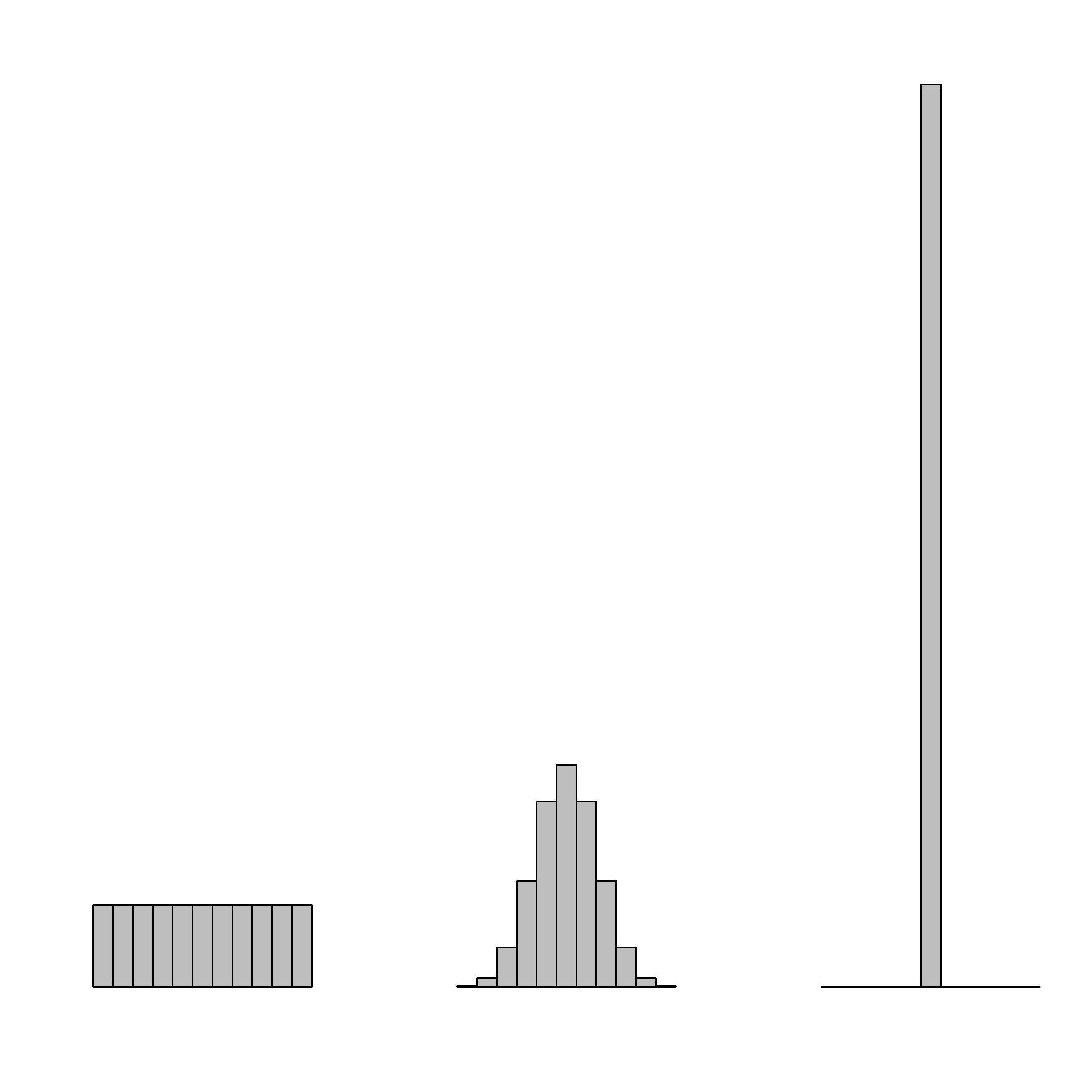} 
  \caption{Priors for $\gamma$ presented using the same vertical axis: (left) uniform distribution, (middle) binomial distribution and (right) degenerate distribution.} 
  \label{fig:intuition}
\end{figure}

Figure \ref{fig:intuition} presents a progressive pattern from the \textit{gambler's belief} (uniform distribution) to the \textit{reverse gambler's belief} (degenerate distribution). Definition \ref{def:tighter} presents conditions that generalize the pattern in Figure \ref{fig:intuition}.

\begin{definition} Let $\gamma \in \{0,1,\ldots,N\}$ have a distribution that is symmetric with respect to $N/2$. The random variable $\gamma$ is
  \label{def:tighter}
  \begin{enumerate}
    \item \textit{tighter than the Binomial$(N, 1/2)$}, if $\frac{P(\gamma=i)}{P(\gamma=i-1)} > \frac{N-i+1}{i}$ for $1 \leq i \leq \left\lfloor N/2 \right\rfloor$.
    \item \textit{looser than the Binomial$(N, 1/2)$}, if $\frac{P(\gamma=i)}{P(\gamma=i-1)} < \frac{N-i+1}{i}$ for $1 \leq i \leq \left\lfloor N/2 \right\rfloor$.
  \end{enumerate}
\end{definition}

Figure \ref{fig:tighter-to-looser} illustrates the conditions in Definition \ref{def:tighter}. The comparison of Figures \ref{fig:intuition} and \ref{fig:tighter-to-looser} shows that: (1) distributions that are looser than the Binomial are between the uniform and the Binomial, (2) distributions that are tighter than the Binomial are between the Binomial and the degenerate. Hence, one might expect that distributions that are tighter (looser) than the Binomial lead to the \textit{(reverse) gambler's belief}. This expectation is confirmed by the following theorem:

\begin{figure}
  \centering
  \includegraphics[height=40mm,width=1\columnwidth]{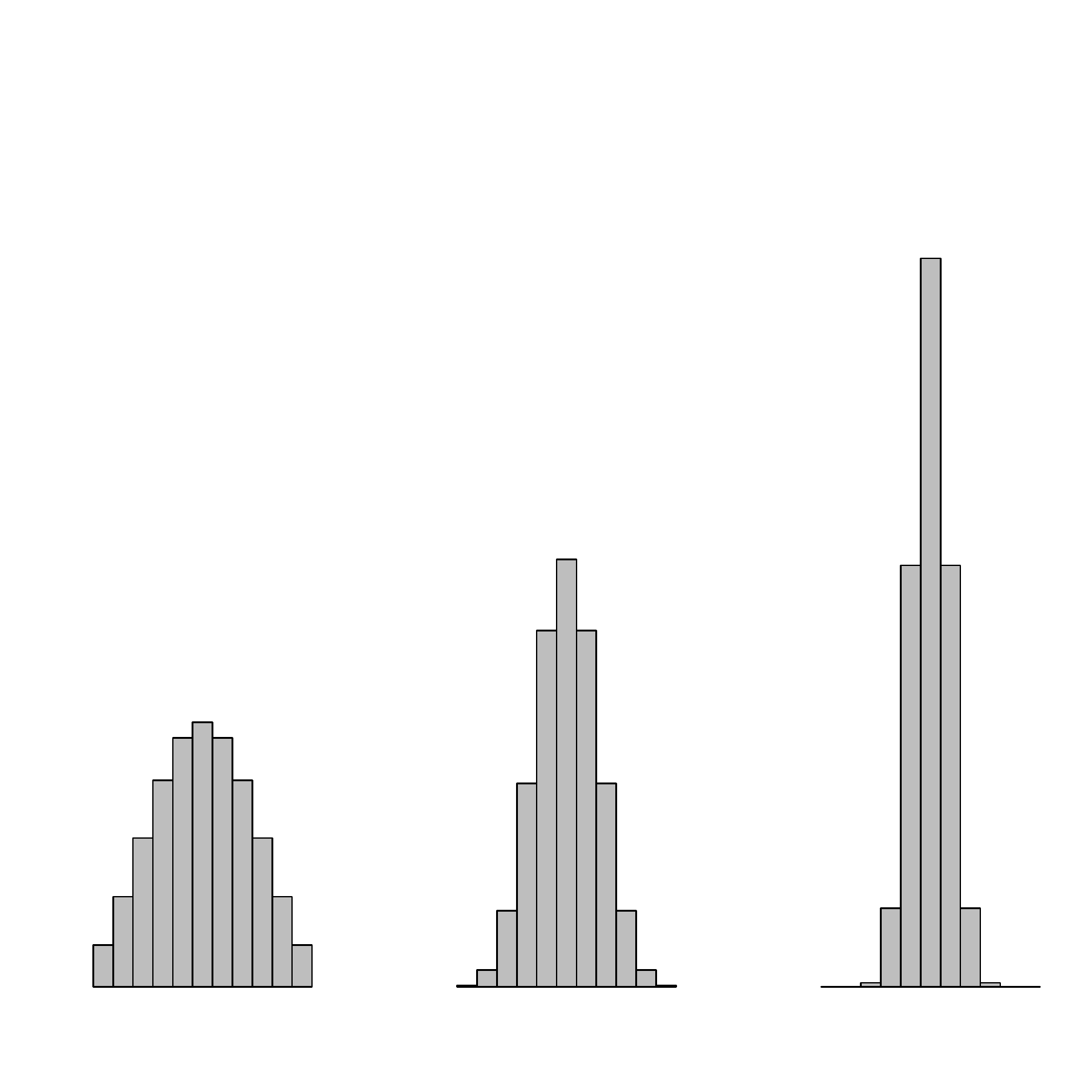} 
  \caption{Illustration of priors for $\gamma$ using the same vertical axis: (left) distribution looser than Binomial, (middle) Binomial distribution and (right) distribution tighter than Binomial.}
  \label{fig:tighter-to-looser}
\end{figure}

\begin{theorem}[gambler's belief]
  \label{theorem:gambler}
  Assume that the distribution of $\gamma$ is symmetric. The \textit{gambler's belief} holds iff $\gamma$ is tighter than the Binomial($N$, 1/2). The \textit{reverse gambler's belief} holds iff $\gamma$ is looser than the Binomial($N$, 1/2).
\end{theorem}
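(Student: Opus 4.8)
The plan is to reduce everything to the behaviour of a single sequence: for $n \le N$ and $0 \le s \le n$, let $p(n,s)$ denote the probability that a \emph{specific} binary string of length $n$ with exactly $s$ ones is observed; by finite exchangeability this is well defined (all strings with the same number of ones are equiprobable). Conditioning on the last coordinate gives the backward recursion $p(n,s) = p(n+1,s) + p(n+1,s+1)$, and a short computation shows the predictive probability depends on the data only through $s$, with
\[ P(X_{n+1}=1 \mid \texttt{x}_n) = \frac{p(n+1,s+1)}{p(n+1,s)+p(n+1,s+1)}, \qquad s = \sum_{i=1}^n x_i. \]
Hence $P(X_{n+1}=1\mid \texttt{x}_n) \lessgtr 1/2$ precisely when $p(n+1,s+1) \lessgtr p(n+1,s)$. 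Writing $m=n+1$, I would first record that, since the law of $\gamma$ is symmetric, $p(m,j)=p(m,m-j)$, and then observe that the \emph{gambler's belief} is exactly the statement that for every $m \le N$ the sequence $j \mapsto p(m,j)$ is strictly unimodal with peak at the centre $m/2$ (predictive below $1/2$ when ones are in the majority, above $1/2$ when they are in the minority). The \emph{reverse} belief is the mirror statement that each $p(m,\cdot)$ is strictly \emph{anti}-unimodal (U-shaped, minimum at the centre).

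Second, I would translate the hypotheses on the prior into this language at the top level $m=N$. Since $P(\gamma=i)=\binom{N}{i}p(N,i)$, we have $p(N,i)=P(\gamma=i)/\binom{N}{i}$, and using $\frac{N-i+1}{i}=\binom{N}{i}/\binom{N}{i-1}$ the ratio defining \emph{tighter than the Binomial} rearranges to $p(N,i) > p(N,i-1)$ for $1 \le i \le \lfloor N/2\rfloor$. Together with the symmetry $p(N,i)=p(N,N-i)$ this says precisely that $p(N,\cdot)$ is strictly symmetric-unimodal; \emph{looser} likewise says $p(N,\cdot)$ is strictly symmetric-anti-unimodal.

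The crux is then a single smoothing lemma: if $a_0,\dots,a_{m+1}$ is symmetric about $(m+1)/2$ and strictly unimodal, then its window-two moving sum $b_j := a_j+a_{j+1}$ is symmetric about $m/2$ and strictly unimodal. Symmetry is immediate from $b_{m-j}=a_{m-j}+a_{m+1-j}=a_{j+1}+a_j=b_j$, and for unimodality one writes $b_{j+1}-b_j=a_{j+2}-a_j$ and compares the distances of $j$ and $j+2$ to the centre $c=(m+1)/2$: their midpoint is $j+1$, so $j+2$ is strictly closer to $c$ than $j$ exactly when $j+1<c$, giving $b_{j+1}>b_j \iff j < (m-1)/2$, with equality at the centre when $m$ is odd. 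Because $p(m,\cdot)$ is obtained from $p(m+1,\cdot)$ by exactly this moving sum (the recursion above), strict symmetric-unimodality of $p(N,\cdot)$ propagates downward to every $m \le N$ by induction; the same lemma with inequalities reversed handles the anti-unimodal case. This closes the loop: \emph{tighter} $\Rightarrow$ $p(N,\cdot)$ unimodal $\Rightarrow$ $p(m,\cdot)$ unimodal for all $m$ $\Rightarrow$ \emph{gambler's belief}, while the converse is trivial since \emph{gambler's belief} at $m=N$ is already \emph{tighter}; and symmetrically for \emph{looser}/\emph{reverse}.

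The main obstacle I anticipate is not the algebra but the careful handling of the lemma at the centre: getting the peak location and the strict-versus-equal behaviour right for both parities of $m$ (for odd $m$ the unimodal profile has two equal central values, which is exactly the $P(X_{n+1}=1\mid\texttt{x}_n)=1/2$ boundary case), and checking that strict unimodality is genuinely preserved rather than degraded to weak unimodality under repeated smoothing. I would also keep an eye on positivity of the support, since the derivation of the predictive formula needs $P(S_n=s)>0$; the cleanest statement works with priors of full support, with degenerate cases (e.g.\ $\gamma$ concentrated at $N/2$) treated as limits.
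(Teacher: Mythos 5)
Your proof is correct, and it takes a genuinely different route from the paper's. The paper splits the hard direction into two lemmas: Lemma \ref{lemma:predictive} shows by explicit manipulation of binomial-coefficient sums (pairing terms symmetrically about the center) that tightness forces the hazard rate $P(M=m\mid M\ge m)$ of the first success time to exceed $1/2$; Lemma \ref{lemma:tight-implies-gambler} then handles an arbitrary history by permuting it into a balanced prefix followed by an all-zeros suffix, proving that the posterior of the residual count $\gamma^{*}$ given the balanced prefix is again tighter than the Binomial$(N-a,1/2)$ for the reduced population, and invoking the hazard-rate lemma; the converse (Lemma \ref{lemma:gambler-implies-tight}) is read off from the predictive at $n=N-1$, exactly as in your last step. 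Your reformulation via the string probabilities $p(m,s)$, the Pascal-type recursion $p(m,s)=p(m+1,s)+p(m+1,s+1)$, and the identification of the gambler's belief with strict symmetric unimodality of every row $p(m,\cdot)$, replaces both of the paper's lemmas with a single combinatorial smoothing lemma (window-two moving sums preserve strict symmetric unimodality, including the two-point central plateau), propagated downward from $m=N$ by induction. This is more elementary and arguably more transparent: the paper's posterior-updating step (tightness is preserved when conditioning on a balanced sample) is doing the same work as your induction, but your version makes the shape-preservation mechanism explicit and treats the equal-counts boundary case cleanly. Both arguments share the same implicit positivity assumptions on the support of $\gamma$; the paper passes over them silently, while you at least flag them, which is the honest thing to do since the degenerate prior at $N/2$ (the paper's own motivating extreme case) falls outside the strict-ratio definition of \emph{tighter}.
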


The assumption in Theorem \ref{theorem:gambler} that $\gamma$ is symmetric is similar to the assumption of exchangeability of de Finetti's representation parameter \citep{finetti1931} in OP. If $\texttt{X}_{N}$ is part of an infinitely exchangeable sequence $\texttt{X}$, then the exchangeability of de Finetti's representation parameter implies that the distribution of $\gamma$ is symmetric. This relation is further considered in Proposition \ref{proposition:extendibility-gambler} that shows that Theorem \ref{theorem:gambler} extends OP in the case of $0-1$ variables. 

\begin{proposition}
  \label{proposition:extendibility-gambler}
  If $\texttt{X}_{N}$ can be extended to an infinitely exchangeable sequence and de Finetti's representation parameter is exchangeable and not degenerate, then $\gamma$ is looser than the Binomial$(N,1/2)$. In particular, if $\gamma$ is distributed according to a Beta-Binomial$(N,\alpha,\alpha)$, then $\gamma$ is looser than the Binomial$(N,1/2)$.
\end{proposition}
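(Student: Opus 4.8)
The plan is to use de Finetti's representation to turn the claim into a statement about the moments of the mixing measure, and then exploit the assumed symmetry of that measure. Since $\texttt{X}_N$ extends to an infinitely exchangeable sequence, de Finetti's theorem supplies a mixing measure $\mu$ on $[0,1]$ with $P(X_1=x_1,\ldots,X_N=x_N)=\int_0^1 \theta^{\gamma}(1-\theta)^{N-\gamma}\,d\mu(\theta)$, so that $P(\gamma=i)=\binom{N}{i}m_i$ where $m_i:=\int_0^1 \theta^i(1-\theta)^{N-i}\,d\mu(\theta)$. Exchangeability of de Finetti's parameter is exactly the statement that $\mu$ is invariant under $\theta\mapsto 1-\theta$, i.e.\ symmetric about $1/2$; I would first record this and note that it forces the distribution of $\gamma$ to be symmetric about $N/2$, as is needed to even invoke Definition \ref{def:tighter}. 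Using $\binom{N}{i}/\binom{N}{i-1}=(N-i+1)/i$, that definition's ratio becomes $\frac{P(\gamma=i)}{P(\gamma=i-1)}=\frac{N-i+1}{i}\cdot\frac{m_i}{m_{i-1}}$, so ``looser than the Binomial$(N,1/2)$'' is equivalent to $m_i<m_{i-1}$ for every $1\le i\le\lfloor N/2\rfloor$. The problem is thereby reduced to showing the moment sequence is strictly decreasing up to the midpoint.

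The core step is a symmetrization. Writing $m_{i-1}-m_i=\int_0^1 \theta^{i-1}(1-\theta)^{N-i}(1-2\theta)\,d\mu(\theta)$ and replacing the integrand by the average of itself and its image under $\theta\mapsto 1-\theta$ (legitimate because $\mu$ is symmetric) yields $m_{i-1}-m_i=\tfrac12\int_0^1 (1-2\theta)\,\theta^{i-1}(1-\theta)^{i-1}\big[(1-\theta)^{N-2i+1}-\theta^{N-2i+1}\big]\,d\mu(\theta)$, where the common factor $\theta^{i-1}(1-\theta)^{i-1}$ can be pulled out precisely because $i\le\lfloor N/2\rfloor$ guarantees $N-i\ge i-1$. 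For $i\le\lfloor N/2\rfloor$ the exponent $N-2i+1$ is at least $1$, and an elementary sign check shows $(1-2\theta)\big[(1-\theta)^{N-2i+1}-\theta^{N-2i+1}\big]\ge 0$ on $[0,1]$ with equality only at $\theta=1/2$ (both bracketed factors change sign there). Hence the whole integrand is nonnegative, giving $m_{i-1}-m_i\ge 0$, and one uses the non-degeneracy of $\mu$ to promote this to strictness, since a non-degenerate symmetric $\mu$ cannot be the point mass at $1/2$.

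Finally, for the Beta-Binomial$(N,\alpha,\alpha)$ claim I would give a direct computation rather than route through the general argument. Here $\mu$ is Beta$(\alpha,\alpha)$, and the Beta-Binomial ratio collapses, via the recursion $\Gamma(z+1)=z\Gamma(z)$, to $\frac{P(\gamma=i)}{P(\gamma=i-1)}=\frac{N-i+1}{i}\cdot\frac{i-1+\alpha}{N-i+\alpha}$; since $\frac{i-1+\alpha}{N-i+\alpha}<1$ exactly when $i\le\lfloor N/2\rfloor$, the looser condition follows, and this form also transparently exhibits the symmetry and non-degeneracy of the underlying $\mu$.

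I expect the main obstacle to be the strictness bookkeeping in the general part rather than the algebra: the sign argument delivers only $m_{i-1}-m_i\ge 0$, and turning ``exchangeable and not degenerate'' into a clean strict inequality for \emph{all} $1\le i\le\lfloor N/2\rfloor$ requires care. For $i=1$ the integrand is positive off $\{1/2\}$, so non-degeneracy alone suffices; but for $i\ge 2$ the factor $\theta^{i-1}(1-\theta)^{i-1}$ also kills the endpoints, so one must argue that $\mu$ places mass in $(0,1)\setminus\{1/2\}$ (otherwise some $m_{i-1}$ vanishes and the ratio in Definition \ref{def:tighter} is not even defined). Pinning down exactly how the hypotheses rule out a measure concentrated on $\{0,1/2,1\}$ is the delicate point, and it is sidestepped entirely in the Beta-Binomial case because the Beta$(\alpha,\alpha)$ density is strictly positive on $(0,1)$.
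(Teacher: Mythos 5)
Your proposal is correct and follows essentially the same route as the paper: de Finetti's representation, symmetrization via the symmetry of the mixing measure, and the same elementary pointwise inequality (your $(1-2\theta)\bigl[(1-\theta)^{N-2i+1}-\theta^{N-2i+1}\bigr]\ge 0$ is exactly the paper's Equation \ref{eqn1:point_inequality} rewritten as a difference of moments rather than a ratio of folded integrals), with the Beta-Binomial case handled by a direct computation instead of as a corollary of the general statement. The strictness issue you flag --- a symmetric, non-degenerate $\mu$ concentrated on $\{0,1\}$ makes $P(\gamma=i-1)=0$, so the ratio in Definition \ref{def:tighter} is undefined for $2\le i\le\lfloor N/2\rfloor$ --- is genuine, but the paper's own proof silently suffers from the same defect, so it does not distinguish your argument from theirs.
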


The Beta-Binomial is widely used in Bayesian analysis \citep{lee99} and has useful properties. For example, this prior leads to an analytical solution for the posterior distribution. Nevertheless, Theorem \ref{theorem:gambler} shows that the Beta-Binomial prior is inconsistent with the \textit{gambler's belief}.

The CMP-Binomial \citep{shmueli2005} has been proposed in order to accomodate under-dispersion and over-dispersion with respect to the Binomial distribution. The parameter $\gamma$ follows the CMP-Binomial$(N,p,\nu)$ whenever $P(\gamma=i) \propto {N \choose i}^{\nu}p^{i}(1-p)^{N-i}$. It follows from definition \ref{def:tighter} that

\begin{proposition}
  \label{proposition:CMP-Binomial-1}
  Let $\gamma \sim \text{CMP-Binomial}(N,1/2,\nu)$. If $\nu > 1$, then $\gamma$ is tighter than the Binomial$(N,1/2)$. If $\nu < 1$, then $\gamma$ is looser than the Binomial$(N,1/2)$.
\end{proposition}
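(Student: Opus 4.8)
The plan is to reduce the whole statement to a single comparison between the ratio of consecutive probabilities and the quantity $\frac{N-i+1}{i}$ that appears in Definition \ref{def:tighter}. First I would check that the symmetry hypothesis of Definition \ref{def:tighter} is actually met, since otherwise the notions of tighter/looser would not apply. With $p=1/2$ the mass function collapses to $P(\gamma=i)\propto \binom{N}{i}^{\nu}$ (the factor $(1/2)^{N}$ being constant in $i$), and since $\binom{N}{i}=\binom{N}{N-i}$ this is manifestly symmetric about $N/2$, so Definition \ref{def:tighter} is applicable.

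Next I would compute the key ratio. Because the normalizing constant and the $(1/2)^N$ factor cancel, the standard identity $\binom{N}{i}/\binom{N}{i-1}=(N-i+1)/i$ gives
\[ \frac{P(\gamma=i)}{P(\gamma=i-1)} = \left(\frac{\binom{N}{i}}{\binom{N}{i-1}}\right)^{\nu} = \left(\frac{N-i+1}{i}\right)^{\nu}. \]
The proposition is then a one-line comparison. Writing $r=\frac{N-i+1}{i}$, for $1\le i\le\lfloor N/2\rfloor$ one has $i<(N+1)/2$, hence $r>1$. For a base $r>1$ the map $\nu\mapsto r^{\nu}$ is strictly increasing, so $r^{\nu}>r$ when $\nu>1$ and $r^{\nu}<r$ when $\nu<1$. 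Substituting back, $\frac{P(\gamma=i)}{P(\gamma=i-1)}>\frac{N-i+1}{i}$ for all $1\le i\le\lfloor N/2\rfloor$ when $\nu>1$, which is exactly the tighter condition, and the reversed inequality (the looser condition) holds when $\nu<1$.

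I do not anticipate a genuine obstacle, as the argument is essentially the display above together with the monotonicity of $\nu\mapsto r^{\nu}$. The only points requiring care are confirming that $r>1$ holds on the entire index range $1\le i\le\lfloor N/2\rfloor$ (so that exponentiating by $\nu$ moves the ratio in the expected direction rather than the reverse) and verifying the symmetry hypothesis so that Definition \ref{def:tighter} genuinely governs the classification.
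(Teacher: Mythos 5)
Your proof is correct and matches the paper's intent: the paper gives no separate proof for this proposition, stating only that it ``follows from Definition \ref{def:tighter},'' and your computation of $\frac{P(\gamma=i)}{P(\gamma=i-1)} = \left(\frac{N-i+1}{i}\right)^{\nu}$ together with the observation that the base exceeds $1$ on the range $1 \le i \le \lfloor N/2 \rfloor$ is precisely the routine verification being invoked. Your explicit check of the symmetry hypothesis is a welcome bit of care that the paper leaves implicit.
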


Hence, it follows from Theorem \ref{theorem:gambler} and Proposition \ref{proposition:CMP-Binomial-1} that, according to the choice of $\nu$, the CMP-Binomial class can be used to model the \textit{gambler's belief} or the \textit{reverse gambler's belief}.

The next subsection considers the other probabilistic description of the \textit{law of maturity} available in the literature: \textit{belief in maturity}. 

\subsection{(Reverse) Belief in Maturity}

Figure \ref{fig:intuition} is used as a starting point for exploring \textit{belief in maturity}. Recall that the degenerate distribution is an extreme case in which the \textit{belief in maturity} holds. Assume that $\texttt{X}_{N}$ is part of a larger exchangeable population $\texttt{X}_{N+M}$ such that $\gamma(\texttt{X}_{N+M})$ has a degenerate distribution. In this case, \textit{belief in maturity} holds for $\texttt{X}_{N+M}$ and, therefore, it also holds for $\texttt{X}_{N}$. Observe that $\gamma(\texttt{X}_{N})|\gamma(\texttt{X}_{N+M})=a \sim \text{Hypergeometric}(N+M,a,N)$. Hence, if $\gamma(\texttt{X}_{N})$ follows the Hypergeometric distribution, then \textit{belief in maturity} holds.

Given the previous result, we look among the properties of the Hypergeometric distribution for conditions that are sufficient for the \textit{belief in maturity}. Let $\frac{P(\gamma=i)/P(\gamma=i-1)}{P(\gamma=i+1)/P(\gamma=i)}$ be the \textit{tightness ratio} of $\gamma$. The \textit{tightness ratio} of the $Binomial(n,p)$ is $\frac{(i+1)(N-i+1)}{i(N-i)}$ and doesn't depend on $p$. Figure \ref{fig:intuition2} illustrates the \textit{tightness ratio} of the Binomial and the Hypergeometric. For every $N,M,p$ and $N \leq a \leq M$, it follows that the \textit{tightness ratio} of the Hypergeometric$(N+M,a,N)$ is larger than that of the Binomial$(N,p)$. Definition \ref{def:2nd-order-tighter} builds on this property of the Hypergeometric distribution.

\begin{figure}
  \centering
  \includegraphics[height=40mm,width=\columnwidth]{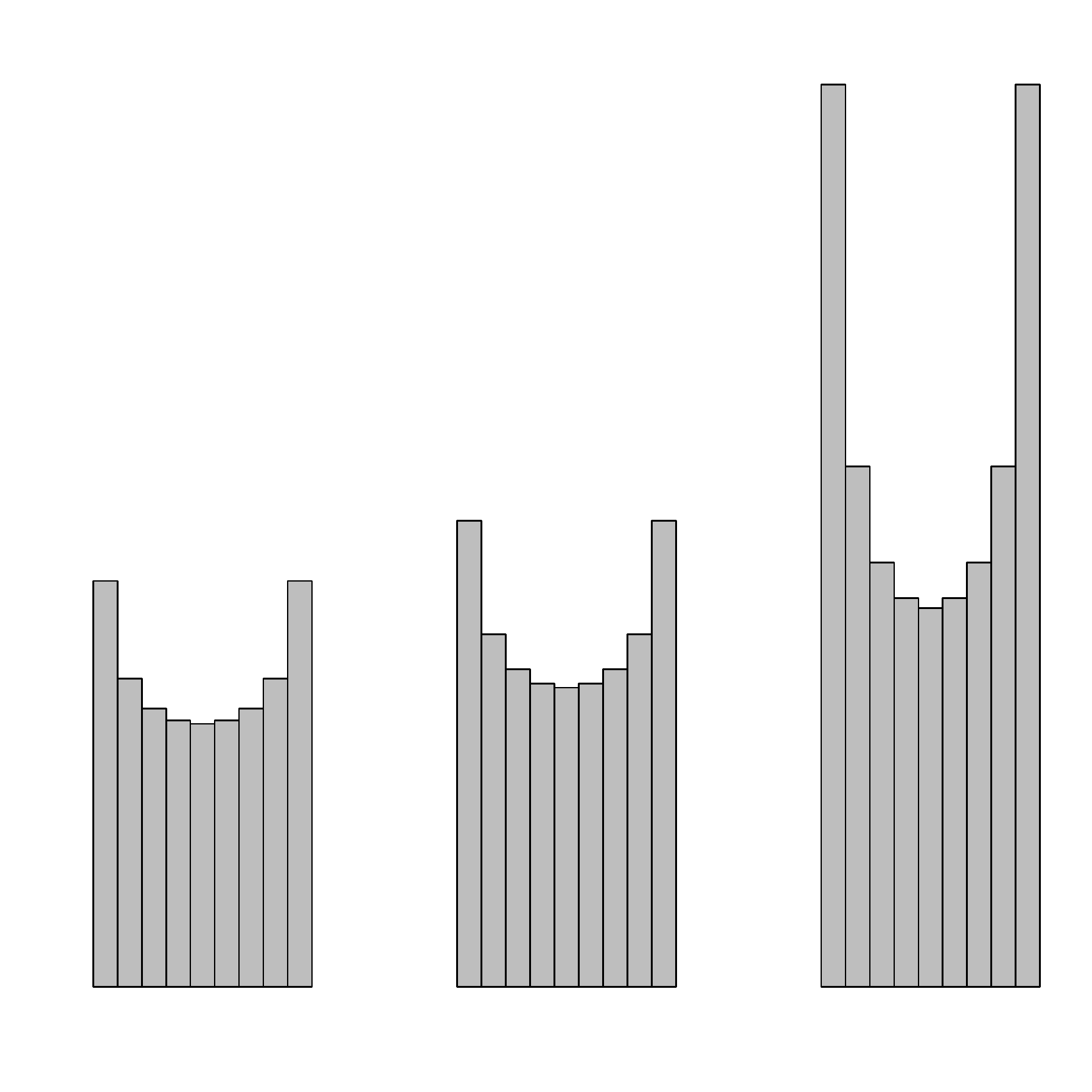} 
  \caption{\textit{Tightness ratio} using the same vertical axis	 for: (left) Binomial, (middle) Hypergeometric(4N,2N,N) and (right) Hypergeometric(2N,N,N).} 
  \label{fig:intuition2}
\end{figure}

\begin{definition}
  \label{def:2nd-order-tighter}
  A variable $\gamma \in \{0,1,\ldots,N\}$ is
  \begin{enumerate}
    \item \textit{2nd-order tighter than the Binomial}, if 
    $$\frac{P(\gamma=i)^{2}}{P(\gamma=i+1)P(\gamma=i-1)} > \frac{(i+1)(N-i+1)}{i(N-i)} \text{, for } 1 \leq i \leq N-1.$$
    \item \textit{2nd-order looser than the Binomial}, if
    $$\frac{P(\gamma=i)^{2}}{P(\gamma=i+1)P(\gamma=i-1)} < \frac{(i+1)(N-i+1)}{i(N-i)} \text{, for } 1 \leq i \leq N-1.$$
  \end{enumerate}
\end{definition}

For every $M$ and $N \leq a \leq M$, the Hypergeometic$(N+M,a,N)$ is 2nd-order tighter than the Binomial. The following Theorem shows that distributions that are 2nd-order tighter than the Binomial and the Hypergeometric relate in the same way to the \textit{belief in maturity}. 

\begin{theorem}[belief in maturity]
  \label{theorem:maturity}
  If $\gamma$ is 2nd-order tighter (looser) than the Binomial, then (reverse) belief in maturity holds.
\end{theorem}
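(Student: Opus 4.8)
The plan is to reduce \emph{belief in maturity} --- the monotonicity in $n$ of the streak predictive $P(X_{n+1}=1\mid X_1=\cdots=X_n=0)$ --- to a variance inequality for the posterior of $\gamma$, and then to extract that inequality from 2nd-order tightness. First I would compute the predictive probability explicitly. Writing $A_n=\{X_1=\cdots=X_n=0\}$ and using finite exchangeability (given $\gamma=i$, the vector $\texttt{X}_N$ is uniform over the $\binom{N}{i}$ arrangements), one gets $P(A_n\mid\gamma=i)=\binom{N-i}{n}/\binom{N}{n}$, so the posterior is $P(\gamma=i\mid A_n)\propto P(\gamma=i)\binom{N-i}{n}$. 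A short computation then yields the clean identity
\[
P(X_{n+1}=1\mid A_n)=\frac{E[\gamma\mid A_n]}{N-n},
\]
which is transparent: after deleting $n$ zeros every $1$ is still present among the $N-n$ remaining slots, so the chance the next slot is a $1$ is the expected fraction of $1$'s left. \emph{Belief in maturity} is precisely the statement that this increases with $n$.

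Second, I would convert the one-step increase from $n$ to $n+1$ into a variance inequality. The posterior update is a tilt $P(\gamma=i\mid A_{n+1})\propto P(\gamma=i\mid A_n)\,(N-n-i)$, so the stage-$(n{+}1)$ mean is a ratio of the first two posterior moments at stage $n$. Setting $B=N-n$, $m_n=E[\gamma\mid A_n]$ and $v_n=\mathrm{Var}(\gamma\mid A_n)$, routine algebra shows that $E[\gamma\mid A_{n+1}]/(N-n-1)>m_n/B$ is equivalent to
\[
v_n<\frac{m_n\,(B-m_n)}{B}.
\]
The right-hand side is the variance of a $\text{Binomial}(B,m_n/B)$, so this says exactly that the posterior of $\gamma$ is \emph{less dispersed than the Binomial with the same mean} (equivalently, that two remaining coordinates are negatively correlated). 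The Binomial gives equality, matching the \emph{indifferent belief} borderline of Subsection~\ref{sec:indifferent}.

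Third, I would show the tilt preserves tightness. Computing the posterior tightness ratio, the factor $\binom{N-i}{n}$ contributes exactly $\frac{(N-i)(N-i-n+1)}{(N-i-n)(N-i+1)}$; multiplying the $\text{Binomial}(N)$ threshold $\frac{(i+1)(N-i+1)}{i(N-i)}$ by it and cancelling collapses it to the $\text{Binomial}(N-n)$ threshold $\frac{(i+1)(N-n-i+1)}{i(N-n-i)}$. Hence if $\gamma$ is 2nd-order tighter than $\text{Binomial}(N)$, then $\gamma\mid A_n$ is 2nd-order tighter than $\text{Binomial}(N-n)$ for every $n$. It therefore suffices to prove the self-contained fact that I expect to be the main obstacle: \emph{if $Y$ on $\{0,\dots,B\}$ is 2nd-order tighter than $\text{Binomial}(B)$, then $\mathrm{Var}(Y)<E[Y](B-E[Y])/B$}; combined with Steps~1--2 applied to each posterior $\gamma\mid A_n$, this yields the theorem.

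For this last step I would argue by sign changes. Since $\frac{(i+1)(B-i+1)}{i(B-i)}=\binom{B}{i}^2/\big(\binom{B}{i+1}\binom{B}{i-1}\big)$, 2nd-order tightness is equivalent to $\log\!\big(f_i/\binom{B}{i}\big)$ being strictly concave in $i$, where $f$ is the pmf of $Y$. Comparing $f$ with the Binomial $b$ having the \emph{same mean}, the ratio $f_i/b_i$ equals $\exp(\text{strictly concave}-\text{linear})$, hence is strictly log-concave and thus unimodal, so $f-b$ has the sign pattern $-,+,-$. Because $f$ and $b$ share both total mass and mean, choosing the quadratic $\phi(i)=(i-c_1)(i-c_2)$ with $c_1<c_2$ straddling the two sign changes makes $\phi(i)\,(f_i-b_i)\le 0$ for all $i$; since $\sum_i\phi(i)(f_i-b_i)=E_f[Y^2]-E_b[Y^2]$ (the lower-order terms vanish by the matched mean and mass), one gets $\mathrm{Var}(Y)\le\mathrm{Var}_b(Y)=E[Y](B-E[Y])/B$, strictly by strict concavity. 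The \emph{looser} case is identical with all inequalities reversed: $\log(f_i/\binom{B}{i})$ is strictly convex, $f/b$ is U-shaped with pattern $+,-,+$, and the same quadratic test gives the opposite variance inequality, hence \emph{reverse belief in maturity}. The delicate point --- and the heart of the argument --- is exactly this passage from the local, neighbour-ratio tightness condition to the global second-moment comparison; the unimodality-plus-quadratic-test device (a Karlin-type variation-diminishing argument) is what makes it work.
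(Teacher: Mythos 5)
Your proposal is correct, but it takes a genuinely different route from the paper's proof. The paper works directly on the hazard rate $r(m)=P(M=m\mid M\geq m)$ of the first success time: writing $t(k)=P(\gamma=k)/{N \choose k}$, it splits the numerator and denominator of $r(m)$ via Pascal's rule so that one block of terms reproduces $r(m+1)$, and the comparison $r(m+1)>r(m)$ then reduces to the pointwise inequality $\frac{t(k)}{t(k)+t(k-1)}>\frac{t(k+1)}{t(k+1)+t(k)}$, which is exactly strict log-concavity of $t$, i.e.\ the 2nd-order tightness hypothesis. You instead (i) identify the streak predictive as a posterior mean, $P(X_{n+1}=1\mid A_n)=E[\gamma\mid A_n]/(N-n)$ where $A_n=\{X_1=\cdots=X_n=0\}$; (ii) show the one-step increase is equivalent to the posterior of $\gamma$ being underdispersed relative to the binomial with the same mean; (iii) show that conditioning on a streak of zeros (a tilt by ${N-i \choose n}$) maps distributions 2nd-order tighter than the Binomial on $\{0,\dots,N\}$ to distributions 2nd-order tighter than the Binomial on $\{0,\dots,N-n\}$; and (iv) prove the key lemma ``tighter implies underdispersed'' by a variation-diminishing argument: log-concavity of $f/b$ forces the sign pattern $-,+,-$ for $f-b$, and a quadratic test function whose roots straddle the sign changes kills the matched lower-order moments, leaving the second-moment comparison. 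Both arguments ultimately rest on the same underlying fact --- 2nd-order tightness is strict log-concavity of $t(i)=P(\gamma=i)/{N \choose i}$ --- but your decomposition buys interpretability and reusable structure: the variance characterization explains why the Binomial is the exact borderline (equality there recovers the indifferent belief of Proposition \ref{indifferent_belief_equiv_2}), and steps (iii)--(iv) are results of independent interest, at the cost of a longer argument whose weight falls on the sign-change lemma; the paper's computation is shorter and purely combinatorial. Two details you should tighten: strictness in the quadratic test requires choosing the roots $c_1,c_2$ at non-integer locations and noting that $f\neq b$ (immediate, since the binomial's $t$-transform is log-linear rather than strictly log-concave), and, exactly as in the paper's own definition, full support of $\gamma$ is implicitly needed for the tightness ratios to be well defined.
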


Proposition \ref{proposition:extendibility-maturity} relates the assumption of 2nd-order looser than the Binomial to the assumption of infinite exchangeability in RW.

\begin{proposition}
  \label{proposition:extendibility-maturity}
  If $\texttt{X}_{N}$ can be extended to an infinitely exchangeable sequence and de Finetti's representation parameter is not degenerate, then $\gamma$ is 2nd-order looser than the Binomial. 
\end{proposition}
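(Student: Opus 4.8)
The plan is to use de Finetti's representation to rewrite the probabilities $P(\gamma=i)$ as moments of the representation parameter, and then to observe that the defining inequality of Definition \ref{def:2nd-order-tighter} collapses, after cancellation of the binomial factors, into the strict log-convexity of that moment sequence, which in turn follows from the Cauchy--Schwarz inequality.

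First, since $\texttt{X}_{N}$ extends to an infinitely exchangeable sequence, de Finetti's theorem \citep{finetti1931} supplies a random variable $\Theta$ with distribution $F$ on $[0,1]$ such that, conditionally on $\Theta=\theta$, the coordinates of $\texttt{X}_{N}$ are i.i.d.\ Bernoulli$(\theta)$. Writing $m_{i}:=E[\Theta^{i}(1-\Theta)^{N-i}]=\int_{0}^{1}\theta^{i}(1-\theta)^{N-i}\,dF(\theta)$, this gives $P(\gamma=i)=\binom{N}{i}m_{i}$ for $0\le i\le N$. I would then substitute this into the ratio of Definition \ref{def:2nd-order-tighter}. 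Using the elementary identity $\binom{N}{i}^{2}/(\binom{N}{i-1}\binom{N}{i+1})=(i+1)(N-i+1)/(i(N-i))$, the binomial factors cancel exactly against the right-hand side of the definition, so that
$$\frac{P(\gamma=i)^{2}}{P(\gamma=i+1)P(\gamma=i-1)}=\frac{(i+1)(N-i+1)}{i(N-i)}\cdot\frac{m_{i}^{2}}{m_{i-1}m_{i+1}}.$$
Hence $\gamma$ is \emph{2nd-order looser} than the Binomial if and only if $m_{i}^{2}<m_{i-1}m_{i+1}$ for every $1\le i\le N-1$, that is, if and only if the sequence $(m_{i})$ is strictly log-convex.

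Next I would establish this log-convexity by Cauchy--Schwarz. Taking $u(\theta)=\theta^{(i-1)/2}(1-\theta)^{(N-i+1)/2}$ and $v(\theta)=\theta^{(i+1)/2}(1-\theta)^{(N-i-1)/2}$ (both well defined since $1\le i\le N-1$), one has $uv=\theta^{i}(1-\theta)^{N-i}$, $u^{2}=\theta^{i-1}(1-\theta)^{N-i+1}$ and $v^{2}=\theta^{i+1}(1-\theta)^{N-i-1}$, so that $m_{i}=\int uv\,dF$, $m_{i-1}=\int u^{2}\,dF$ and $m_{i+1}=\int v^{2}\,dF$. Cauchy--Schwarz then yields $m_{i}^{2}\le m_{i-1}m_{i+1}$ directly.

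The main obstacle is to upgrade this to a \emph{strict} inequality, and this is exactly where the non-degeneracy hypothesis enters. Equality in Cauchy--Schwarz forces $u$ and $v$ to be proportional $F$-almost everywhere, i.e.\ $\theta/(1-\theta)$ to be $F$-a.s.\ constant on $(0,1)$, which occurs only when $\Theta$ is degenerate; since the hypothesis rules this out, the inequality is strict for every $i$, and the proof is complete. I would need to take modest care at the endpoints $\theta\in\{0,1\}$, where $u$ or $v$ may vanish, and to note that the ratios are considered only where the relevant $P(\gamma=i)$ are positive, but these are routine bookkeeping points rather than genuine difficulties.
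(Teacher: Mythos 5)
Your proposal is correct and follows essentially the same route as the paper: apply de Finetti's representation to write $P(\gamma=i)=\binom{N}{i}E_{Q}[\pi^{i}(1-\pi)^{N-i}]$, cancel the binomial factors against the right-hand side of Definition \ref{def:2nd-order-tighter}, and conclude by Cauchy--Schwarz applied to $\theta^{(i-1)/2}(1-\theta)^{(N-i+1)/2}$ and $\theta^{(i+1)/2}(1-\theta)^{(N-i-1)/2}$. In fact, your treatment of the equality case---noting that equality in Cauchy--Schwarz would force $\Theta$ to be degenerate, which the hypothesis excludes---makes explicit a point of strictness that the paper's proof leaves implicit.
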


Hence, similarly to Theorem \ref{theorem:gambler} and Proposition \ref{proposition:extendibility-gambler} being an extension of the result in OP, Theorem \ref{theorem:maturity} and Propostion \ref{proposition:extendibility-maturity} are an extension of the result in RW.

The following proposition shows that the family CMP-Binomial$(N,p,\nu)$ \citep{shmueli2005} is sufficiently general to accomodate both distributions that are 2nd-order tighter than the Binomial and that are 2nd-order looser than the Binomial.

\begin{proposition}
  \label{proposition:CMP-Binomial-2}
  Let $\gamma \sim \text{CMP-Binomial}(N,p,\nu)$. If $\nu > 1$, then $\gamma$ is 2nd-order tighter than the Binomial. If $\nu < 1$, then $\gamma$ is 2nd-order looser than the Binomial.
\end{proposition}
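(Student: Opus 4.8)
The plan is to compute the 2nd-order tightness ratio of the CMP-Binomial directly from its probability mass function and to show that it equals the $\nu$-th power of the corresponding ratio for the Binomial; the statement then reduces to an elementary monotonicity observation. The approach parallels the one behind Proposition \ref{proposition:CMP-Binomial-1}, but the crucial feature here is that the dependence on $p$ cancels completely, which is exactly why the conclusion holds for arbitrary $p$ rather than only $p = 1/2$.

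First I would substitute $P(\gamma = i) \propto \binom{N}{i}^{\nu} p^{i}(1-p)^{N-i}$ into the 2nd-order tightness ratio. The unknown normalizing constant $C$ enters as $C^{2}/C^{2}=1$, and the factors $p^{i}(1-p)^{N-i}$ cancel as well: the numerator $P(\gamma=i)^{2}$ contributes $p^{2i}(1-p)^{2(N-i)}$, while the denominator $P(\gamma=i+1)P(\gamma=i-1)$ contributes $p^{(i+1)+(i-1)}(1-p)^{(N-i-1)+(N-i+1)} = p^{2i}(1-p)^{2(N-i)}$. What survives is purely the binomial-coefficient part raised to the power $\nu$:
$$\frac{P(\gamma=i)^{2}}{P(\gamma=i+1)P(\gamma=i-1)} = \left(\frac{\binom{N}{i}^{2}}{\binom{N}{i+1}\binom{N}{i-1}}\right)^{\nu}.$$

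Next, using $\binom{N}{i}/\binom{N}{i+1} = (i+1)/(N-i)$ and $\binom{N}{i}/\binom{N}{i-1} = (N-i+1)/i$, the bracketed expression simplifies to $R := \frac{(i+1)(N-i+1)}{i(N-i)}$, which is precisely the right-hand side appearing in Definition \ref{def:2nd-order-tighter}. Hence the CMP-Binomial's 2nd-order tightness ratio equals $R^{\nu}$, and the comparison in that definition becomes a comparison between $R^{\nu}$ and $R$.

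Finally I would observe that $R > 1$ for every $1 \leq i \leq N-1$, since $(i+1)(N-i+1) - i(N-i) = N+1 > 0$. Because the base $R$ exceeds $1$, raising it to a power $\nu > 1$ strictly increases it, giving $R^{\nu} > R$ and hence 2nd-order tighter, while a power $\nu < 1$ strictly decreases it, giving $R^{\nu} < R$ and hence 2nd-order looser. There is essentially no obstacle in this argument; the only point requiring care is the bookkeeping that makes both $C$ and the $p^{i}(1-p)^{N-i}$ terms cancel, which is what frees the result from any dependence on $p$ and lets the statement hold for all $p$.
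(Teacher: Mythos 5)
Your proposal is correct: the cancellation of the normalizing constant and of the $p^{i}(1-p)^{N-i}$ factors is exactly right, the resulting ratio $R^{\nu}$ with $R=\frac{(i+1)(N-i+1)}{i(N-i)}>1$ gives the conclusion immediately, and this is precisely the direct verification from Definition \ref{def:2nd-order-tighter} that the paper has in mind (the paper states the proposition as an immediate consequence of the definition and includes no written proof). Your write-up in fact supplies a detail the paper leaves implicit, namely why the conclusion is free of $p$, so nothing is missing.
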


Hence, it follows from Theorem \ref{theorem:maturity} and Proposition \ref{proposition:CMP-Binomial-2} that, according to the choice of $\nu$, the CMP-Binomial class can be used to model the \textit{belief in maturity} or the \textit{belief in reverse maturity}.

\section{Discussion}
\label{discussion}

The \textit{law of maturity} is a belief that is commonly combined with a belief in the symmetry between observations. For example, consider cases in which:

\begin{itemize}
  \item Equally fit individuals are competing for limited resources. For example, consider a sweepstake in which a company allocates a fixed budget to the prizes \citep{kalra2010} or a vase in which many seedlings have been planted \citep{diniz2010, kadane2014}. In these situations one might believe that any particular combination of the individuals is equally prone to succeed, but that the sparsity of resources creates a negative association between the successes of individuals. 
  \item A latent feature that controls the outcomes of an experiment varies accross observations. For example, consider the gender of pigs in a litter \citep{brooks1991}. It has been hypothesized that the sex of a pig zygote depends on the hormone levels of the parents. Fluctuations on the hormone levels generate negative association in the gender of the pigs. Indeed, \citet{brooks1991} finds that the number of male pigs per litter is underdispersed with respect to the Binomial distribution.
\end{itemize}

Despite the reasonability of the beliefs in maturity and in symmetry in the above situations, the assumption of infinite exchangeability (a common representation of symmetry) cannot be combined coherently with the \textit{law of maturity} (RW, OP). The contradiction between these two assumptions lead us investigate whether there exist probabilistic models that reconcile the beliefs in the \textit{law of maturity} and in the symmetry between observations.

As a specific contribution, Theorems \ref{theorem:gambler} and \ref{theorem:maturity} show that the \textit{law of maturity} is supported by the assumption of finite exchangeability. Under the assumption of finite exchangeability, if $\sum_{i=1}^{N}{X_{i}}$ is \textit{tighter than the Binomial}, then the \textit{law of maturity} holds. This condition is illustrated with the CMP-Binomial \citep{shmueli2005} parametric family in Propositions \ref{proposition:CMP-Binomial-1} and \ref{proposition:CMP-Binomial-2}. Hence, in situations in which one believes in the \textit{law of maturity} and in finite exchangeability, assuming that $\sum_{i=1}^{N}{X_{i}}$ follows the CMP-Binomial provides a more accurate probabilistic model than the ones obtained by the assumption of infinite exchangeability. 

As a more general contribution, we question the belief that there is no qualitative difference between assuming finite or infinite exchangeability. The combined results of RW, OP and Theorems \ref{theorem:gambler} and \ref{theorem:maturity} show that, in respect to the \textit{law of maturity}, such qualitative difference exists. While the \textit{law of maturity} is not supported by infinite exchangeability, it is supported by finite exchangeability. This example suggests caution in assuming infinite exchageability, specially when such an assumption depends on considering an hypothetical infinite population.

\bibliographystyle{abbrvnat}
\bibliography{maturity}

\begin{thebibliography}{16}
\providecommand{\natexlab}[1]{#1}
\providecommand{\url}[1]{\texttt{#1}}
\expandafter\ifx\csname urlstyle\endcsname\relax
  \providecommand{\doi}[1]{doi: #1}\else
  \providecommand{\doi}{doi: \begingroup \urlstyle{rm}\Url}\fi

\bibitem[Bernardo and Smith(1994)]{bernardo94}
J.~Bernardo and A.~Smith.
\newblock \emph{Bayesian theory}.
\newblock Wiley Series in Probability and Mathematical Statistics, 1994.

\bibitem[Brooks et~al.(1991)Brooks, James, and Gray]{brooks1991}
R.~J. Brooks, W.~H. James, and E.~Gray.
\newblock Modelling sub-binomial variation in the frequency of sex combinations
  in litters of pigs.
\newblock \emph{Biometrics}, pages 403--417, 1991.

\bibitem[{de Finetti}(1931)]{finetti1931}
B.~{de Finetti}.
\newblock Funzione caratteristica di un fenomeno aleatorio.
\newblock \emph{Atti della R. Academia Nazionale del Linceo}, 6:\penalty0
  251--299, 1931.

\bibitem[Diniz et~al.(2010)Diniz, Tutia, Leite, et~al.]{diniz2010}
C.~A. Diniz, M.~H. Tutia, J.~G. Leite, et~al.
\newblock Bayesian analysis of a correlated binomial model.
\newblock \emph{Brazilian Journal of Probability and Statistics}, 24\penalty0
  (1):\penalty0 68--77, 2010.

\bibitem[Iglesias et~al.(2009)Iglesias, Loschi, Pereira, and
  Wechsler]{pilar2009}
P.~Iglesias, R.~Loschi, C.~Pereira, and S.~Wechsler.
\newblock A note on extendibility and predictivistic inference in finite
  populations.
\newblock \emph{Brazilian Journal of Probability and Statistics}, 23\penalty0
  (2):\penalty0 216--226, 2009.

\bibitem[Kadane(2014)]{kadane2014}
J.~B. Kadane.
\newblock Sums of possibly associated bernoulli variables: The
  conway-maxwell-binomial distribution.
\newblock \emph{arXiv preprint arXiv:1404.1856}, 2014.

\bibitem[Kalra and Shi(2010)]{kalra2010}
A.~Kalra and M.~Shi.
\newblock Consumer value-maximizing sweepstakes and contests.
\newblock \emph{Journal of Marketing Research}, 47\penalty0 (2):\penalty0
  287--300, 2010.

\bibitem[Lee and Lio(1999)]{lee99}
J.~Lee and Y.~Lio.
\newblock A note on bayesian estimation and prediction for the beta-binomial
  model.
\newblock \emph{Journal of Statistical Computation and Simulation}, 63\penalty0
  (1):\penalty0 73--91, 1999.

\bibitem[Lindley and Phillips(1976)]{lindley1976}
D.~Lindley and L.~Phillips.
\newblock Inference for a {B}ernoulli process (a {B}ayesian view).
\newblock \emph{The American Statistician}, 30\penalty0 (3):\penalty0 112--119,
  1976.

\bibitem[Mendel(1994)]{mendel1994}
M.~Mendel.
\newblock Operational parameters in {B}ayesian models.
\newblock \emph{Test}, 3\penalty0 (2):\penalty0 195--206, 1994.

\bibitem[Militana et~al.(2010)Militana, Wolfson, and Cleaveland]{militana2010}
E.~Militana, E.~Wolfson, and J.~Cleaveland.
\newblock An effect of inter-trial duration on the gambler's fallacy choice
  bias.
\newblock \emph{Behavioural Processes}, 2010.

\bibitem[O'Neill and Puza(2005)]{neill2005}
B.~O'Neill and B.~Puza.
\newblock In defence of the reverse gambler's belief.
\newblock \emph{Mathematical Scientist}, 30\penalty0 (1):\penalty0 13--16,
  2005.

\bibitem[Oppenheimer and Monin(2009)]{oppenheimer2009}
D.~Oppenheimer and B.~Monin.
\newblock The retrospective gambler's fallacy: Unlikely events, constructing
  the past, and multiple universes.
\newblock \emph{Judgment and Decision Making}, 4\penalty0 (5):\penalty0
  326--334, 2009.

\bibitem[Rabin and Vayanos(2010)]{rabin2010}
M.~Rabin and D.~Vayanos.
\newblock The gambler's and hot-hand fallacies: theory and applications.
\newblock \emph{Review of Economic Studies}, 77\penalty0 (2):\penalty0
  730--778, 2010.

\bibitem[Rodrigues and Wechsler(1993)]{rodrigues1993}
F.~Rodrigues and S.~Wechsler.
\newblock A discrete {B}ayes explanation of a failure-rate paradox.
\newblock \emph{IEEE Transactions on Reliability}, 42\penalty0 (1):\penalty0
  132--133, 1993.

\bibitem[Shmueli et~al.(2005)Shmueli, Minka, Kadane, Borle, and
  Boatwright]{shmueli2005}
G.~Shmueli, T.~P. Minka, J.~B. Kadane, S.~Borle, and P.~Boatwright.
\newblock A useful distribution for fitting discrete data: revival of the
  {C}onway-{M}axwell-{P}oisson distribution.
\newblock \emph{Journal of the Royal Statistical Society: Series C (Applied
  Statistics)}, 54\penalty0 (1):\penalty0 127--142, 2005.

\end{thebibliography}

\section*{Proofs}

\begin{proof}[Proposition \ref{indifferent_belief_equiv_1}]
Consider that the coordinates of $\texttt{X}_{N}$ are jointly independent. Since $\texttt{X}_{N}$ if finitely exchangeable, the coordinates of $\texttt{X}_{N}$ are identically distributed. Thus, since $\gamma = \sum_{i=1}^{N}{X_{i}}$ and $X_{i}$ are i.i.d., conclude that $\gamma \sim  \text{Binomial}(N,P(X_{1}=1))$. Hence, under the assumption of independence, there exists $\pi \in [0,1]$ such that $\gamma \sim \text{Binomial}(N,\pi)$.

Also observe that, since $\texttt{X}_{N}$ is exchangeable, the distribution of $\texttt{X}_{N}$ is completely specified by the distribution of $\gamma$. Hence, there exists a unique distribution on $\texttt{X}_{N}$ for each distribution on $\gamma$. Conclude from the last paragraph that, if $\gamma \sim \text{Binomial}(N,\pi)$, then the coordinates of $\texttt{X}_{N}$ are independent. 

The proof of Proposition \ref{indifferent_belief_equiv_1} follows from the implications proved in the two previous paragraphs. 
\end{proof}

\begin{proof}[Propostion \ref{indifferent_belief_equiv_2}]

In order to prove Proposition \ref{indifferent_belief_equiv_2} we first show that the statement that $\texttt{X}_{N}$ models indifferent belief is equivalent to the joint independence of the coordinates of $\texttt{X}_{N}$. Observe that, by definition, the statement that $\texttt{X}_{N}$ models indifferent belief implies that the coordinates of $\texttt{X}_{N}$ are jointly independent. Also, since $\texttt{X}_{N}$ is finitely exchangeable, $P(X_{i} = 1) = P(X_{j} = 1)$. Hence, joint independence of the coordinates of $\texttt{X}_{N}$ imply that $\texttt{X}_{N}$ models indifferent belief.

The proof of Proposition \ref{indifferent_belief_equiv_2} follows from the equivalence that is proved in the previous paragraph and the direct application of Proposition \ref{indifferent_belief_equiv_1}. 
\end{proof}

\begin{lemma} Let $M = \min\{i \leq N: X_{i}=1\}$ be the first trial in which a success is observed. If $\gamma$ is tighter than the Binomial($N$, 1/2), then
  \label{lemma:predictive}
  \begin{align*}
    P(M=m|M \geq m) &> 1/2,	& \mbox{for $m=2,\ldots, N$}
  \end{align*}
  Similarly, if $\gamma$ is looser than the Binomial($N, 1/2$), then
  \begin{align*}
    P(M=m|M \geq m) &< 1/2,	& \mbox{for $m=2,\ldots, N$}
  \end{align*}
\end{lemma}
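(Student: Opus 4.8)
The plan is to translate the streak-predictive probability into a statement about the per-sequence weights determined by the law of $\gamma$, and then to prove the resulting inequality by summation by parts together with the symmetry of $\gamma$. First I would record the reduction. Writing $a_j = P(X_1 = \cdots = X_j = 0)$, finite exchangeability gives $P(M=m\mid M\ge m) = (a_{m-1}-a_m)/a_{m-1}$, so the claim $P(M=m\mid M\ge m) > 1/2$ is equivalent to $a_{m-1} > 2a_m$. Since every sequence with $k$ ones has probability $w_k := P(\gamma=k)/\binom{N}{k}$, one has $a_j = \sum_k \binom{N-j}{k} w_k$, and (using Pascal's rule) the target inequality becomes, with $L = N-m$,
\[ \sum_k \left(\binom{L}{k-1} - \binom{L}{k}\right) w_k > 0. \]

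The second step is to reinterpret the hypothesis in a form suited to this sum. A short computation shows that $\gamma$ being tighter than the Binomial$(N,1/2)$ is exactly the statement $w_i > w_{i-1}$ for $1 \le i \le \lfloor N/2\rfloor$; combined with the symmetry $w_k = w_{N-k}$, this says that $(w_k)$ is symmetric and strictly increasing toward the center $N/2$ (looser reverses both inequalities). This monotone, symmetric shape of the weights is what I want to feed into the sum.

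The main step is summation by parts. The partial sums of the coefficients telescope cleanly, $\sum_{l \le k}\big(\binom{L}{l-1}-\binom{L}{l}\big) = -\binom{L}{k}$, and these vanish at both endpoints because $L < N$; Abel summation then rewrites the target as $\sum_{k} \binom{L}{k}(w_{k+1}-w_k)$. Setting $\Delta_k = w_{k+1}-w_k$, the symmetry $w_k=w_{N-k}$ gives $\Delta_{N-1-k} = -\Delta_k$, so pairing the index $k$ with $N-1-k$ collapses the sum to $\sum_{k < N/2} \Delta_k\big(\binom{L}{k}-\binom{L}{N-1-k}\big)$, the self-paired middle term (present only for odd $N$) vanishing because $\Delta_{(N-1)/2}=0$.

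It then remains to check that both factors of each paired term are nonnegative. For $k$ in the first half, tightness gives $\Delta_k > 0$. The delicate point, and the step I expect to be the main obstacle, is the binomial comparison $\binom{L}{k} \ge \binom{L}{N-1-k}$: since $\binom{L}{\cdot}$ is unimodal about $L/2 = (N-m)/2$, this holds precisely when $k$ is at least as close to the mode as $N-1-k$, which reduces to the condition $k \le (N-1)/2$ and hence covers the entire first half. The inequality is strict at $k=0$, where $\binom{L}{N-1}=0$ while $\binom{L}{0}=1$ and $\Delta_0>0$, so the whole sum is strictly positive and $a_{m-1} > 2a_m$ follows. The reverse-maturity statement is identical with every inequality reversed, since looser than the Binomial merely flips the sign of each $\Delta_k$.
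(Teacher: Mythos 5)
Your proof is correct, and it reaches the result by a genuinely different organization than the paper's, even though the two arguments share their core ingredients. Both start identically: you work with the per-sequence weights $w_k = P(\gamma=k)/\binom{N}{k}$ (the paper's $t(k)$) and observe that tighter than the Binomial$(N,1/2)$, together with symmetry, means the weights are symmetric and strictly increasing toward $N/2$. From there the routes diverge. The paper keeps $P(M=m\mid M\ge m)$ as a ratio of sums, rewrites the denominator as $\sum_k \binom{N-m}{k-1}\bigl(t(k)+t(k-1)\bigr)$, and proves the ratio exceeds $1/2$ by a mediant-type argument split into two ranges of $k$: the range $k \le \min(m-1,N-m+1)$ is handled term by term via $t(k)/(t(k)+t(k-1)) > 1/2$, while the leftover range (present only when $m < N/2+1$, hence the indicator functions in the paper's display) is handled by pairing $k$ with $N-k+1$ and comparing $\binom{N-m}{k-1}$ with $\binom{N-m}{N-k}$. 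You instead linearize the claim to $a_{m-1} > 2a_m$, Abel-sum to pass to the increments $\Delta_k = w_{k+1}-w_k$, and finish with one global pairing $k \leftrightarrow N-1-k$ justified by the antisymmetry $\Delta_{N-1-k} = -\Delta_k$; indices whose partner coefficient is out of range contribute $\Delta_k\binom{L}{k} \ge 0$ automatically, which is precisely the content of the paper's first range, so your single pairing absorbs the paper's case analysis. The kinship is close: expanding the paper's paired terms, each reduces to $\tfrac12\Delta_{k-1}\bigl(\binom{N-m}{k-1}-\binom{N-m}{N-k}\bigr)$, a positive multiple of your summands, so both proofs ultimately rest on the same inequality between binomial coefficients on either side of the mode. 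What your version buys is uniformity (no case split on $m$, no indicators) and an explicit treatment of strictness via the $k=0$ term, which the paper leaves implicit; what the paper's formulation buys is reuse, since its ratio form of $P(M=m\mid M\ge m)$ is carried over verbatim into the proof of Theorem \ref{theorem:maturity}.
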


\begin{proof}[Lemma \ref{lemma:predictive}]
Let $t(k) = P(\gamma = k)/{N \choose k}$. Observe that
\begin{align*}
 P(M=m)={\displaystyle\sum_{\scriptscriptstyle k=1}^{\scriptscriptstyle N-m+1}}{N-m \choose k-1}t(k)
\end{align*}
Hence,
\begin{align*}
  P(M=m|M \geq m) &= \frac{\sum_{k=1}^{N-m+1}{{N-m \choose k-1}t(k)}}{t(0)+ \sum_{k=1}^{N-m+1}{\sum_{i=m}^{N-k+1}{{N-i \choose k-1} t(k)}}}	\\
  =\frac{{\sum_{k=1}^{N-m+1}}{N-m \choose k-1} t(k)}{t(0)+ {\sum_{k=1}^{N-m+1}} {N-m+1 \choose k} t(k) } &= \frac{{\sum_{k=1}^{N-m+1}}{N-m \choose k-1} t(k)}{{\sum_{k=1}^{N-m+1}} {N-m \choose k-1} (t(k)+t(\mbox{$k$$-$$1$}))} =
\end{align*}
$$= \frac{{\displaystyle\sum_{\scriptscriptstyle k=1}^{\scriptscriptstyle min(m-1,N-m+1)}}{N-m \choose k-1} t(k) + \left[{\displaystyle\sum_{\scriptscriptstyle k=m}^{\scriptscriptstyle N-m+1}}{N-m \choose k-1} t(k)\right] \mathbf{I}_{(m < \ N/2 + 1)} }{{\displaystyle\sum_{\scriptscriptstyle k=1}^{\scriptscriptstyle min(m-1,N-m+1)}} {N-m \choose k-1} (t(k)+t(\mbox{$k$$-$$1$})) + \left[{\displaystyle\sum_{\scriptscriptstyle k=m}^{\scriptscriptstyle N-m+1}} {N-m \choose k-1} (t(k)+t(\mbox{$k$$-$$1$}))\right] \mathbf{I}_{(m < \ N/2 + 1)}}$$\ \\

Consider the case in which $\gamma$ is tighter than the Binomial$(N,1/2)$. In order to prove the lemma it is sufficient to show the following: (1) the first sum in the numerator divided by the first sum in the denominator is greater than 1/2, (2) if $m < N/2+1$, the second sum in the numerator divided by the second sum in the denominator is greater than 1/2.

(1) If $k < (N+1)/2$, since $\gamma$ is tighter than the Binomial$(N,1/2)$, conclude that $\frac{t(k)}{t(k)+t(k-1)} > 1/2$. Hence, for every $1 \leq k \leq \min(m-1,N-m+1)$, $\frac{{N-m \choose k-1}t(k)}{{N-m \choose k-1}(t(k)+t(k-1))} > 1/2$.  

(2) Since $m < N/2+1$,
\begin{align}
  \label{eqn3}
  &\frac{{\displaystyle\sum_{\scriptscriptstyle k=m}^{\scriptscriptstyle N-m+1}}{N-m \choose k-1} t(k)}{{\displaystyle\sum_{\scriptscriptstyle k=m}^{\scriptscriptstyle N-m+1}} {N-m \choose k-1} (t(k)+t(\mbox{$k$$-$$1$})) } = \nonumber \\ 
  &= \frac{{\displaystyle\sum_{\scriptscriptstyle k=m}^{\scriptscriptstyle \lfloor N/2 \rfloor}}[{N-m \choose k-1} t(k)+{N-m \choose N-k}t(\mbox{$N$$-$$k$$+$$1$})]}{{\displaystyle\sum_{\scriptscriptstyle k=m}^{\scriptscriptstyle \lfloor N/2 \rfloor}} [{N-m \choose k-1} (t(k)+t(\mbox{$k$$-$$1$}))+{N-m \choose N-k}(t(\mbox{$N$$-$$k$$+$$1$})+t(\mbox{$N$$-$$k$}))]}
\end{align}

Notice that ${N-m \choose k-1} = {N-m \choose N-m-k+1}$. Also, since $m \leq N/2$ and $k < (N+1)/2$, ${N-m \choose N-k-m+1} > {N-m \choose N-k}$. Since $\gamma$ is symmetric, $\frac{t(k)+t(N-k+1)}{t(k)+t(k-1)+t(N-k+1)+t(N-k)} = 1/2$. Also, since $k < N/2+1$ and $\gamma$ is tighter than the Binomial$(N,1/2)$, conclude that $\frac{t(k)}{t(k)+t(k-1)} > 1/2$. Hence, in Equation \ref{eqn3} the ratio between each term in the numerator and each term in the denominator is greater than 1/2.

When $\gamma$ is looser than the Binomial$(N,1/2)$, $\frac{t(k)}{t(k)+t(k-1)} < 1/2$. Hence, all the inequalities are reversed. 
\end{proof}

\begin{lemma}
  \label{lemma:tight-implies-gambler}
  If $\gamma$ is tighter than the Binomial$(N,1/2)$, then the \textit{gambler's belief} holds. If $\gamma$ is looser than the Binomial$(N,1/2)$, then the \textit{reverse gambler's belief} holds.
\end{lemma}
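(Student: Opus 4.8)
The plan is to recast the \textit{gambler's belief} as a statement about a single predictive probability and then to reduce it, by repeated conditioning, to Lemma \ref{lemma:predictive}. By finite exchangeability, $P(X_{n+1}=1\mid\texttt{x}_n)$ depends on $\texttt{x}_n$ only through its number of $1$'s; write $g(n,s)$ for this common value when $\texttt{x}_n$ has $s$ ones. Since the two outcomes carry counts $n-s$ and $s$, the \textit{gambler's belief} is exactly the assertion that the less-observed outcome is the more probable, i.e. $g(n,s)>1/2$ whenever $s<n/2$ and $g(n,s)<1/2$ whenever $s>n/2$ (for every $n<N$). First I would record the $0$--$1$ symmetry: because $\gamma$ is symmetric about $N/2$, the map $X_i\mapsto 1-X_i$ leaves the law of $\texttt{X}_N$ invariant, whence $g(n,s)=1-g(n,n-s)$. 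This identity lets me treat only the case $s<n/2$, the case $s>n/2$ following immediately.

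The core of the argument is a conditioning step that preserves the hypotheses of Lemma \ref{lemma:predictive}. Fix an observed sample with $s$ ones and $n-s$ zeros, where $s<n/2$. I would condition on one coordinate equal to $1$ and a second equal to $0$ (a balanced pair). The remaining $N-2$ coordinates form a finitely exchangeable sequence whose count $\gamma''=\gamma-1$ has law $P(\gamma''=j)\propto P(\gamma=j+1)\,(j+1)(N-1-j)$ on $\{0,\ldots,N-2\}$, since $P(X_1=1,X_2=0\mid\gamma=k)\propto k(N-k)$. The key claim is that $\gamma''$ is again symmetric about $(N-2)/2$ and again tighter (looser) than the Binomial$(N-2,1/2)$.

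Both parts of the claim reduce to short ratio computations. Symmetry about $(N-2)/2$ amounts to $P(\gamma=N-1-j)=P(\gamma=j+1)$, which is precisely the symmetry of $\gamma$ about $N/2$. For tightness, the required inequality $\tfrac{P(\gamma''=i)}{P(\gamma''=i-1)}>\tfrac{N-1-i}{i}$ simplifies, after cancelling the common factor $(N-1-i)/i$, to $\tfrac{P(\gamma=i+1)}{P(\gamma=i)}>\tfrac{N-i}{i+1}$, which is exactly the defining inequality of ``tighter than Binomial$(N,1/2)$'' at index $i+1$; the needed range $1\le i\le\lfloor (N-2)/2\rfloor$ sits inside the range where that inequality is available. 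Iterating the step $s$ times (using all $s$ ones and $s$ of the zeros) yields a finitely exchangeable, symmetric sequence on $N-2s$ coordinates that is tighter (looser) than the Binomial$(N-2s,1/2)$, and in this reduced model $g(n,s)$ equals $P(M=m\mid M\ge m)$ with $m=(n-2s)+1\ge 2$ (and $m\le N-2s$ since $n<N$). Lemma \ref{lemma:predictive}, applied to the reduced model, then gives $g(n,s)>1/2$ in the tighter case and $g(n,s)<1/2$ in the looser case, completing the proof.

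The step I expect to be delicate is the preservation of symmetry under conditioning: conditioning on the $s$ ones (or the zeros) alone destroys the symmetry of $\gamma$ about its midpoint, and Lemma \ref{lemma:predictive} genuinely needs that symmetry. Conditioning on balanced $(1,0)$ pairs is what rescues the argument, since the factor $k(N-k)$ in the hypergeometric likelihood is itself symmetric under $k\mapsto N-k$ and therefore keeps the reduced count symmetric; checking that the same step preserves the one-sided tightness inequality is the remaining bookkeeping, and the looser case is identical with every inequality reversed.
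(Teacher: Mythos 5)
Your proposal is correct and takes essentially the same route as the paper's proof: both condition on a balanced set of observations (the paper on a single block of $a=2s$ paired ones and zeros via a permutation, you on $s$ successive $(1,0)$ pairs), verify that the remaining count stays symmetric and tighter (looser) than the Binomial on the reduced population, and then apply Lemma \ref{lemma:predictive} to the leftover streak of zeros. The only difference is one of detail: you spell out the ratio computation behind the preservation claim (and the bit-flip symmetry behind the reduction to $s<n/2$), which the paper asserts without proof.
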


\begin{proof}[Lemma \ref{lemma:tight-implies-gambler}]
  Without loss of generality, assume that the number of $0$'s in $\texttt{x}_{n}$ is larger than the number of $1$'s. Since the model is exchangeable, for any permutation $\pi$ of $\{1,\ldots,n\}$, $P(\gamma=\gamma_{0}|\texttt{X}_{i}=\texttt{x}_{i}) = P(\gamma=\gamma_{0}|\texttt{X}_{i}=\texttt{x}_{\pi(i)})$. Consider a permutation $\pi$ and $\texttt{y} = \texttt{x}_{\pi}$ such that, for some $a$, $\texttt{y}_{1}^{a}$ has an equal number of $0$'s and $1$'s and $\texttt{y}_{a+1}^{n}$ only has $0$'s. Let $\gamma^{*} = \sum_{i=a+1}^{N}{X_{i}}$.

\begin{align*}
  P(X_{n+1}=1|\texttt{x})	&= P(X_{n+1}=1|\texttt{y}) = \\
				&= \sum_{i=0}^{N}{P(X_{n+1}=1|\texttt{y}_{a+1}^{n},\gamma^{*}=i)P(\gamma^{*}=i|\texttt{y}_{1}^{a},\texttt{y}_{a+1}^{n})}
\end{align*}

That is, $P(X_{n+1}=1|\texttt{x})$ is equal to $P(X_{n+1}=1|\texttt{y}_{a+1}^{n})$ using $P(\gamma^{*}=i|\texttt{y}_{1}^{a})$ as a prior for $\gamma^{*}$. Observe that, 

\begin{align*}
  P(\gamma^{*}=i|y_{1}^{a})	&\propto P(\gamma^{*}=i,y_{1}^{a}) 					\\
				&=	 P(\gamma=i+a/2) {i+a/2 \choose a/2}{N-i-a/2 \choose a/2}
\end{align*}

 The last equality follows since $y_{1}^{a}$ has same number of $1$'s and $0$'s. Hence, if $\gamma$ is tighter (looser) than the Binomial$(N,1/2)$, then $\gamma^{*}|y_{1}^{a}$ is tighter (looser) than the Binomial$(N-a,1/2)$. Using $P(\gamma^{*}=i|\texttt{y}_{1}^{a})$ as a prior for $\gamma^{*}$, conclude from Lemma \ref{lemma:predictive} that, if $\gamma$ is tighter (looser) than the Binomial$(N,1/2)$, then $P(X_{n+1}=1|\texttt{y}_{a+1}^{n}) > (<) 1/2$.  
\end{proof}

\begin{lemma}
  \label{lemma:gambler-implies-tight}
  Assume the distribution of $\gamma$ is symmetric on $N/2$. If the (reverse) \textit{gambler's belief} holds, then $\gamma$ is tighter (looser) than the Binomial$(N,1/2)$.
\end{lemma}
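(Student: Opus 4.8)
The plan is to prove the converse of Lemma \ref{lemma:tight-implies-gambler} by exhibiting, for each index $1 \le i \le \lfloor N/2 \rfloor$, a single observed sample of length $N-1$ whose predictive probability directly encodes the ratio $t(i)/t(i-1)$, where $t(k) = P(\gamma=k)/{N \choose k}$ as in the proof of Lemma \ref{lemma:predictive}. First I would record the specialization of the \textit{gambler's belief} to the binary case: with only the two outcomes $0$ and $1$, ordering the predictive probability inversely to the counts means that $P(X_{n+1}=1|\texttt{x}_{n}) > 1/2$ whenever $\texttt{x}_{n}$ contains strictly more $0$'s than $1$'s (and $< 1/2$ when the $1$'s are in the majority). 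I would also note that being \textit{tighter than the Binomial}$(N,1/2)$ is equivalent to $t(i) > t(i-1)$ for $1 \le i \le \lfloor N/2 \rfloor$, since ${N \choose i}/{N \choose i-1} = (N-i+1)/i$ converts the defining inequality $\frac{P(\gamma=i)}{P(\gamma=i-1)} > \frac{N-i+1}{i}$ into $t(i)/t(i-1) > 1$; the \textit{looser} case corresponds to the reversed inequalities.

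The key computation is the predictive probability after observing $N-1$ coordinates. Fix $0 \le j \le N-1$ and let $\texttt{x}_{N-1}$ be any sample of length $N-1$ containing exactly $j$ ones. Because $\gamma = j + X_{N}$, the event $\{X_{N}=1\}$ coincides with $\{\gamma = j+1\}$ and $\{X_{N}=0\}$ with $\{\gamma=j\}$. By finite exchangeability the specific length-$N$ sequence $(\texttt{x}_{N-1}, 1)$ has probability $P(\gamma=j+1)/{N \choose j+1} = t(j+1)$ and $(\texttt{x}_{N-1},0)$ has probability $P(\gamma=j)/{N \choose j} = t(j)$, whence
\[ P(X_{N} = 1 \mid \texttt{x}_{N-1}) = \frac{t(j+1)}{t(j)+t(j+1)}. \]
This is the step I expect to carry the whole argument, and it is short: conditioning on a sample of maximal length $N-1$ collapses the posterior for $\gamma$ onto the two adjacent values $j$ and $j+1$, so the single ratio $t(j+1)/t(j)$ is laid bare without any of the pairing bookkeeping used in the forward direction.

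With this identity in hand the conclusion is immediate. Assuming the \textit{gambler's belief} holds, choosing $\texttt{x}_{N-1}$ with $j$ ones and $N-1-j$ zeros for $j < (N-1)/2$ (so that zeros are in the majority) gives $P(X_{N}=1|\texttt{x}_{N-1}) > 1/2$, i.e. $t(j+1) > t(j)$. As $j$ ranges over $0 \le j < (N-1)/2$, the index $i := j+1$ ranges over exactly $1 \le i \le \lfloor N/2 \rfloor$ (checking the two parities of $N$ separately), yielding $t(i) > t(i-1)$ throughout; since $\gamma$ is assumed symmetric about $N/2$, this is precisely the statement that $\gamma$ is \textit{tighter than the Binomial}$(N,1/2)$. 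The \textit{reverse gambler's belief} gives $P(X_{N}=1|\texttt{x}_{N-1}) < 1/2$ on the same samples, hence $t(i) < t(i-1)$ on the same range, so $\gamma$ is \textit{looser}. The only points needing care are the correct reading of Definition \ref{def:gambler} in the binary case and the parity bookkeeping matching the index range to $\lfloor N/2 \rfloor$; neither presents a genuine obstacle.
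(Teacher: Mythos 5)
Your proof is correct and is essentially the paper's own argument: the paper likewise conditions on a sample of length $N-1$ with $s_{N-1}-1$ ones, computes $P(X_{N}=1|\texttt{x}_{N-1}) = \bigl(1 + \tfrac{P(\gamma=s_{N-1}-1)}{P(\gamma=s_{N-1})}\tfrac{N-s_{N-1}+1}{s_{N-1}}\bigr)^{-1}$, and reads off the tightness inequality from the gambler's belief exactly as you do with $t(j+1)/(t(j)+t(j+1))$. Your $t(k)$ reformulation and the parity check on the index range are just notational variants of the same two-point-posterior argument.
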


\begin{proof}[Lemma \ref{lemma:gambler-implies-tight}]
  Let $\texttt{x}_{N-1} \in \{0,1\}^{N-1}$ and $s_{N-1}-1 = \sum_{i=1}^{N-1}{\texttt{x}_{N-1}}$.
  \begin{align*}
    &	P(X_{N}=1|\texttt{X}_{N-1}=\texttt{x}_{N-1}) =														\\
    &= \frac{P(X_{N}=1,\texttt{X}_{N-1}=\texttt{x}_{N-1})}{P(X_{N}=0,\texttt{X}_{N-1}=\texttt{x}_{N-1})+P(X_{N}=1,\texttt{X}_{N-1}=\texttt{x}_{N-1})}	\\
    &= \frac{P(\gamma=s_{N-1})/{N \choose s_{N-1}}}{P(\gamma=s_{N-1}-1)/{N \choose s_{N-1}-1} +P(\gamma=s_{N-1})/{N \choose s_{N-1}}}			\\
    &= \frac{1}{1 + \frac{P(\gamma=s_{N-1}-1)}{P(\gamma=s_{N-1})} \frac{N-s_{N-1}+1}{s_{N-1}}}
  \end{align*}
  If the gambler's relief holds, then $P(X_{N}=1|\texttt{X}_{N-1}=\texttt{x}_{N-1}) > \frac{1}{2}$, for every $s_{N-1} \leq \frac{N}{2}$. Hence, for every $s_{N-1} \leq \frac{N}{2}$, $\frac{P(\gamma=s_{N-1})}{P(\gamma=s_{N-1}-1)} > \frac{N-s_{N-1}+1}{s_{N-1}}$. Since the distribution of $\gamma$ is symmetric around $N/2$, conclude that $\gamma$ is tighter than the Binomial$(N,1/2)$. Similarly, if the reverse gambler's belief holds, then $\frac{P(\gamma=s_{N-1})}{P(\gamma=s_{N-1}-1)} < \frac{N-s_{N-1}+1}{s_{N-1}}$ and $\gamma$ is looser than the Binomial$(N,1/2)$. 
\end{proof}

\begin{proof}[Theorem \ref{theorem:gambler}]
  Follows from Lemmas \ref{lemma:tight-implies-gambler} and \ref{lemma:gambler-implies-tight}. 
\end{proof}

\begin{proof}[Proposition \ref{proposition:extendibility-gambler}]
  Since the distribution of de Finetti's parameter is exchangeable, the distribution of $\gamma$ is symmetric with respect to $N/2$. Hence, it remains to show that $P(\gamma=i)/P(\gamma=i-1) < (N-i+1)/y$, for $1 \leq i \leq N/2$. First, observe that, for every $0 \leq \pi \leq 1$ such that $\pi \neq 0.5$ and $n \geq 0$, it follows that $(\pi^{n+1}-(1-\pi)^{n+1})(\pi-(1-\pi)) > 0$. Hence, developing this expression, $\pi (1-\pi)^{n+1} + (1-\pi)\pi^{n+1} < \pi^{n+2} + (1-\pi)^{n+2}$. Hence, for $i \leq N/2$,
  
  \begin{align}
    \label{eqn1:point_inequality}
    \frac{\pi^{i}(1-\pi)^{N-i} + (1-\pi)^{i}\pi^{N-i}}{\pi^{i-1}(1-\pi)^{N-i+1} + (1-\pi)^{i-1}\pi^{N-i+1}} < 1
  \end{align}
  
  Next, since $\texttt{X}_{n}$ can be extended to an infinitely exchangeable sequence $\texttt{X}$, one can apply de Finetti's representation theorem \citep{finetti1931} to $\gamma$. That is, there exists a distribution $Q$ on $[0,1]$ such that, for every $i \in \mathbb{N}$, $P(\gamma = i) = \int_{0}^{1}{{N \choose i} \pi^{i}(1-\pi)^{N-i}Q(d\pi)}$. Thus,
  
  \begin{align*}
    \frac{P(\gamma=i)}{P(\gamma=i-1)}	&= \frac{\int_{0}^{1}{{N \choose i} \pi^{i}(1-\pi)^{N-i}Q(d\pi)}}{\int_{0}^{1}{{N \choose i-1} \pi^{i-1}(1-\pi)^{N-i+1}Q(d\pi)}}								\\
					&= \frac{\int_{0}^{0.5}{{N \choose i} (\pi^{i}(1-\pi)^{N-i}+((1-\pi)^{i}\pi^{N-i})Q(d\pi)}}{\int_{0}^{0.5}{{N \choose i-1} (\pi^{i-1}(1-\pi)^{N-i+1}+(1-\pi)^{i-1}\pi^{N-i+1}) Q(d\pi)}}	\\
					&< \frac{{N \choose i}}{{N \choose i-1}} = \frac{N-i+1}{i}																		
  \end{align*}
  The last inequality follows from Equation \ref{eqn1:point_inequality}. 
\end{proof}

\begin{proof}[Theorem \ref{theorem:maturity}]
  Let $M = \min\{i \leq N: X_{i}=1\}$ be the first trial in which a success is observed and $r(m) = P(M=m|M \geq m)$. In order to verify \textit{belief in maturity}, one must show that $r(m)$ increases on $m$. Let $t(k) = P(\gamma=k)/{N \choose k}$. Using the same development as in the proof of Lemma \ref{lemma:predictive},

  \begin{align}
    \label{eqn1:maturity}
    r(m)	&= \frac{{\displaystyle\sum_{\scriptscriptstyle k=1}^{\scriptscriptstyle N-m+1}}{N-m \choose k-1} t(k)}{{\displaystyle\sum_{\scriptscriptstyle k=0}^{\scriptscriptstyle N-m+1}} {N-m+1 \choose k} t(k)} \nonumber \\
		&= \frac{{\displaystyle\sum_{\scriptscriptstyle k=1}^{\scriptscriptstyle N-m}}{N-m-1 \choose k-1} t(k)+{\displaystyle\sum_{\scriptscriptstyle k=2}^{\scriptscriptstyle N-m+1}}{N-m-1 \choose k-2} t(k)}{{\displaystyle\sum_{\scriptscriptstyle k=0}^{\scriptscriptstyle N-m}} {N-m \choose k} t(k)+{\displaystyle\sum_{\scriptscriptstyle k=1}^{\scriptscriptstyle N-m+1}} {N-m \choose k-1} t(k)}
  \end{align}

  Observe that, in Equation \ref{eqn1:maturity}, the first sum in the numerator divided by the first sum in the denominator equals to $r(m+1)$. Therefore, to obtain $r(m+1) > r(m)$, it is sufficient to prove the following: 

  \begin{align*}
    \frac{{\displaystyle\sum_{\scriptscriptstyle k=1}^{\scriptscriptstyle N-m}}{N-m-1 \choose k-1} t(k)}{{\displaystyle\sum_{\scriptscriptstyle k=0}^{\scriptscriptstyle N-m}} {N-m \choose k} t(k)} > \frac{{\displaystyle\sum_{\scriptscriptstyle k=2}^{\scriptscriptstyle N-m+1}}{N-m-1 \choose k-2} t(k)}{{\displaystyle\sum_{\scriptscriptstyle k=1}^{\scriptscriptstyle N-m+1}} {N-m \choose k-1} t(k)}
  \end{align*}

which is equivalent to

  \begin{align}
    \label{eqn2:maturity}
    \frac{{\displaystyle\sum_{\scriptscriptstyle k=1}^{\scriptscriptstyle N-m}}{N-m-1 \choose k-1} t(k)}{{\displaystyle\sum_{\scriptscriptstyle k=1}^{\scriptscriptstyle N-m}} {N-m-1 \choose k-1} (t(k)+t(k-1))} > \frac{{\displaystyle\sum_{\scriptscriptstyle k=2}^{\scriptscriptstyle N-m+1}}{N-m-1 \choose k-2} t(k)}{{\displaystyle\sum_{\scriptscriptstyle k=2}^{\scriptscriptstyle N-m+1}} {N-m-1 \choose k-2} (t(k)+t(k-1))}
  \end{align}

If $\gamma$ is 2nd-order tighter than the Binomial, for every $1 \leq k \leq N-1$,

  \begin{align*}
    \frac{P(\gamma=k+1)/P(\gamma=k)}{P(\gamma=k)/P(\gamma=k-1)} < \frac{(N-y)/(y+1)}{(N-y+1)/y}
  \end{align*}

Hence, for every $1 \leq k \leq N-1$,

  \begin{align}
    \label{eqn3:maturity}
    \frac{t(k)}{t(k)+t(k-1)} > \frac{t(k+1)}{t(k+1)+t(k)}
  \end{align}

Hence, if $\gamma$ is 2nd-order tighter than the Binomial, Equation \ref{eqn3:maturity} holds and, therefore, Equation \ref{eqn2:maturity} also holds. Hence, if $\gamma$ is 2nd-order tighter than the Binomial, \textit{belief in maturity} holds. If $\gamma$ is 2nd-order looser than the Binomial, the proof follows by reversing the inequality in Equation \ref{eqn3:maturity}. 

\end{proof}

\begin{proof}[Proof of Proposition \ref{proposition:extendibility-maturity}]
  Since $\texttt{X}_{n}$ can be extended to an infinitely exchangeable sequence $\texttt{X}$, one can apply de Finetti's representation theorem \citep{finetti1931} to $\gamma$. That is, there exists a distribution $Q$ on $[0,1]$ such that, for every $i \in \mathbb{N}$, $P(\gamma = i) = \int_{0}^{1}{{N \choose i} \pi^{i}(1-\pi)^{N-i}Q(d\pi)}$. Hence,
  
  \begin{align*}
    &	\frac{P(\gamma=i)^{2}}{P(\gamma=i+1)P(\gamma=i-1)}																					\\
    &=	\frac{\left(\int_{0}^{1}{{N \choose i}\pi^{i}((1-\pi)^{N-i}Q(d\pi)}\right)^{2}}{\int_{0}^{1}{{N \choose i+1}\pi^{i+1}((1-\pi)^{N-i-1}Q(d\pi)}\int_{0}^{1}{{N \choose i-1}\pi^{i-1}((1-\pi)^{N-i+1}Q(d\pi)}}	\\
    &=	\frac{E_{Q}[\pi^{i}(1-\pi)^{N-i}]^{2}}{E_{Q}[\pi^{i+1}(1-\pi)^{N-i-1}]E_{Q}[\pi^{i-1}(1-\pi)^{N-i+1}]} \cdot \frac{(i+1)(N-i+1)}{i(N-i)}										\\
    &<	\frac{(i+1)(N-i+1)}{i(N-i)}
  \end{align*}
  The last line follows from the Cauchy-Schwarz inequality. 
  
\end{proof}

\end{document}